\documentclass[12pt]{article}

\usepackage{bm}
\usepackage[extra]{tipa}
\usepackage{amsmath}
\usepackage{amssymb}
\usepackage{amsthm}
\usepackage{mathtools}
\usepackage{authblk}

\usepackage{enumitem}
\usepackage{natbib}
\usepackage{hyperref}
\usepackage{graphicx}
\usepackage{xr}
\usepackage{url}
\usepackage{xcolor}

\usepackage{a4wide}

\newcommand{\iu}{\mathit{i}}

\newcommand{\norm}[1]{\lVert#1\rVert}
\newcommand{\abs}[1]{\lvert#1\rvert}

\newcommand{\bE}[1]{\mathbf{E}[#1]}
\newcommand{\bP}[1]{\mathbf{P}(#1)}
\newcommand{\bV}[1]{\mathbf{Var}[#1]}

\DeclarePairedDelimiterX{\inp}[2]{\langle}{\rangle}{#1, #2}

\newtheorem{theorem}{Theorem}
\newtheorem{proposition}{Proposition}
\newtheorem{corollary}{Corollary}
\newtheorem{lemma}{Lemma}

\newcommand{\N}{\mathbb{N}}
\newcommand{\R}{\mathbb{R}}
\newcommand{\Z}{\mathbb{Z}}
\newcommand{\I}{\mathbf{I}}

\graphicspath{
  {./figures/}
}

\begin{document}

\title{Estimation of L\'evy-driven CARMA models under renewal sampling}

\author[1]{Frank Bosserhoff}
\author[2]{Giacomo Francisci}
\author[1]{Robert Stelzer}

\affil[1]{Institute of Mathematical Finance, Ulm University, Ulm, Germany \texttt{robert.stelzer@uni-ulm.de}}

\affil[2]{Department of Mathematics, University of Trento, Trento, Italy \texttt{giacomo.francisci@unitn.it}}

\date{\today}

\maketitle

\begin{abstract}
  Continuous-time autoregressive and moving average (CARMA) models are extensively used to model high-frequency and irregularly sampled data. We study Whittle estimation for the model parameters when the process is observed at renewal times. The driving noise is assumed to be a L\'evy process allowing for more flexibility including heavy-tailed marginal distributions and jumps in the sample paths. We show that the Whittle estimator based on the integrated periodogram is consistent and asymptotically normal under very mild conditions. To obtain these results, we establish the asymptotic normality of the integrated periodogram. \\

\noindent \textbf{Keywords}: Aliasing, Asymptotic normality, CARMA model, Consistency, L\'evy process, Ornstein–Uhlenbeck process, Periodogram, Renewal sequence, Spectral density, Whittle estimator.
\end{abstract}

\section{Introduction}
\label{sec:introduction}

Continuous-time autoregressive moving average (CARMA) processes have long attracted interest by researchers in physics, engineering, probability and statistics \citep{Fowler-1936,Doob-1944}. Renewed interest in these models has emerged due to advances in stochastic modeling and the increased availability of high-frequency and irregularly spaced data \citep{Brockwell-2001a,Brockwell-2001b,Brockwell-2009,Brockwell-2014, Brockwell-2024}. Among many fields, they are applied in finance and economics to model asset and option prices, interest rates and volatility \citep{Brockwell-2009,Brockwell-2012,Andresen-2014,Benth-2014}; in signal processing to extract underlying signals from noisy data (e.g.\ \citet{McElroy-2013}); and in medical and natural sciences among others to forecast and model temperature, wind speed, light curves, and fluid turbulence \citep{Kelly-2014,Tomasson-2015,Yu-2021, Darus-2022}. In particular, the use of L\'evy processes as driving noise provides more flexibility and allows for heavy-tailed and asymmetric distributions and jumps in the sample paths, which are typically observed in many applications (see for instance \citet{Barndorff-Nielsen-1997,Rydberg-1997,Cont-2001}). Additionally, in a wide range of contexts, data are observed at random times, which are often considered independent of the value of the process \citep{Jones-1981,Jones-1985,Charlot-2008,Bardet-2010,Li-2014,Philippe-2021,Brandes-2023}. For instance, health monitoring systems such as smartphones and smartwatches allow to monitor a wealth of data including body temperature, electrocardiogram, blood oxygen, heart rate, breathing, as well as, noise levels, location, pace and speed. Measurements are often taken every few seconds or minutes when at rest to save battery. As another case, financial and economic data are in many cases sampled at irregular and randomly spaced times, which are often modeled as independent of the underlying process \citep{Bergstrom-1990,Ait-Sahalia-2003,Ait-Sahalia-2004, Hayashi-2005}. These considerations highlight the need for a flexible estimation method.

A number of papers in the existing literature deal with the problem of parameter estimation for univariate and multivariate L\'evy-driven CARMA models. \citet{Brockwell-2007},  \citet{Schlemm-2012}, and \citet{Brockwell-2019} establish consistency and asymptotic normality of a quasi-maximum likelihood estimator observed at equidistant time points. \citet{Fasen-2013} propose an estimation procedure based on convergence in probability of a smoothed normalized periodogram when the time step approaches zero. \citet{Fasen-2020} derive an indirect procedure via generalized M-estimators that is robust against outliers. More recently, \citet{Fasen-2022} establish consistency and asymptotic normality of Whittle parameter estimators. Their methodology uses that the discretely sampled process is a discrete-time vector ARMA process and it is not applicable to irregular sampling schemes. Furthermore, the equally spaced sampling scheme employed in the above papers is prone to identification issues caused by the aliasing effect and requires suitable identifiability assumptions.

In this paper, we investigate the problem of estimating the parameters of univariate CARMA processes when sampled at an independent renewal sequence. Section \ref{sec:main_definitions_results} contains a precise definition of the process and the sampling scheme. Incidentally, the use of an independent renewal sequence is beneficial for avoiding aliasing \citep{Masry-1978,Lii-1992}. Our anti-aliasing assumption is readily verified when the inter-arrival times are exponentially distributed. The estimation method uses that a certain integral of the spectral density of the sampled process is maximized at the true parameter value. The spectral density is estimated by the periodogram and the estimation procedure relies on maximizing the integrated periodogram \citep{Lii-1992}. We establish the asymptotic normality of the integrated periodogram in Section \ref{sec:asymptotic_normality_periodogram}. Using this result we prove in Section \ref{sec:asymptotic_normality_estimator} the main result of the paper, that is, the asymptotic normality of the estimated parameter vector. While \citet{Lii-1992} require finite moments of all orders, we only require the driving L\'evy process to have finite moments of order $4+\delta$. Furthermore, we demonstrate that the asymptotic distribution is the same in both of the following two scenarios: (i) the number of observations $n$ of the CARMA process goes to infinity and (ii) the process is observed at renewal times on the interval $[0,T]$ and $T \to \infty$. The paper concludes in Section \ref{sec:concluding_remarks} with final remarks and suggestions for future work. Several auxiliary results are presented in the Appendix.

\section{Main definitions and results} \label{sec:main_definitions_results}

We begin by recalling the definition of a CARMA process and its most relevant properties \citep{Brockwell-2024}. Let $L=(L(t))_{t \in \R}$ be a two-sided L\'evy process with mean zero and variance $\sigma_{L}^{2} \in (0,\infty)$. For $0 \leq q<p$ let $a$ and $b$ be polynomials of the form
\begin{align*}
  a(z) &= z^{p} + a_{1} z^{p-1} + \dots + a_{p-1} z + a_{p} \\
  b(z) &= b_{0} + b_{1} z + \dots + b_{q-1} z^{q-1} + z^{q}.
\end{align*}
A CARMA process $Y=(Y(t))_{t \in \R}$ is given in its \emph{state-space form} by
\begin{equation} \label{state-space_CARMA}
  Y(t) = \mathbf{b}^{\top} \mathbf{X}(t),
\end{equation}
where $\mathbf{X}=(\mathbf{X}(t))_{t \in \R}$ is a process with values in $\R^{p}$ satisfying the stochastic differential equation
\begin{equation} \label{state-space_SDE}
  d\mathbf{X}(t) = A \mathbf{X}(t) dt + \mathbf{e}_{p} dL(t),
\end{equation}
where
\begin{equation*}
  A = \begin{pmatrix}
    0 & 1 & 0 & \dots & 0 \\
    0 & 0 & 1 & \dots & 0 \\
    \vdots & \vdots & \vdots &   & \vdots \\
    0 & 0 & 0 & \dots & 1 \\
    -a_{p} & -a_{p-1} & -a_{p-2} & \dots & -a_{1}
  \end{pmatrix}, \;
  \mathbf{e}_{p} = \begin{pmatrix} 0 \\ 0 \\ \vdots \\ 0 \\ 1 \end{pmatrix}, \;
  \mathbf{b} = \begin{pmatrix} b_{0} \\ b_{1} \\ \vdots \\ b_{p-2} \\ b_{p-1} \end{pmatrix},
\end{equation*}
and $a_{1}, \dots, a_{p}, b_{0}, \dots, b_{p-1}$ are real-valued coefficients satisfying $b_{q}=1$ and $b_{j}=0$ for $j>q$. For $p=1$ we have that $A=(-a_{1})$. We assume without loss of generality (w.l.o.g.) that the polynomials $a$ and $b$ do not have any common zeros and the zeros of $a$ (or equivalently, the eigenvalues of the matrix $A$) have all strictly negative real part. This implies that the coefficients of $a$ are positive (see \citet{Hurwitz-1895}). It follows that the CARMA process $Y$ is \emph{causal} and the unique stationary solutions of \eqref{state-space_CARMA} and \eqref{state-space_SDE} are given by
\begin{equation} \label{CARMA}
  Y(t) = \int_{-\infty}^{t} \mathbf{b}^{\top} e^{A(t-u)} \mathbf{e}_{p} \, dL(u) \; \text{ and } \; \mathbf{X}(t) = \int_{-\infty}^{t} e^{A(t-u)} \mathbf{e}_{p} \, dL(u).
\end{equation}
If $L$ has finite moments of order $r$, then the moments of order $r$ of $Y$ exist and are finite (see e.g.\ \citet{Brockwell-2013}). Furthermore, $Y$ is both weakly and strongly stationary and by Proposition 3.34 of \citet{Marquardt-2007} it is strongly mixing with exponentially decaying coefficients. The autocovariance function $\gamma_{Y}$ of $Y$ is given for all $h \in \R$ by
\begin{equation} \label{autocovariance}
  \gamma_{Y}(h) = \sigma_{L}^{2} \mathbf{b}^{\top} e^{A \abs{h}} \Sigma \mathbf{b},
  \; \text{ where } \; \Sigma = \int_{0}^{\infty} e^{A s} \mathbf{e}_{p} \mathbf{e}_{p}^{\top} e^{A^{\top} s} \, ds.
\end{equation}
The spectral density $\phi_{Y}$ of $Y$ is the Fourier transform of $\gamma_{Y}$ and is given for all $u \in \R$ by
\begin{equation} \label{spectral_density}
  \phi_{Y}(u) = \frac{\sigma_{L}^{2}}{2 \pi} \biggl \lvert \frac{b(\iu u)}{a(\iu u)} \biggr \rvert^{2},
\end{equation}  
where $\iu$ is the imaginary unit. We refer to \citet{Brockwell-2001a, Brockwell-2001b, Brockwell-2014} and \citet{Brockwell-2024} for more details on CARMA processes. We are interested in estimating the parameters of the CARMA process. The sampling sequence is defined in \eqref{renewal_sampling_sequence} below using an increment process $\nu=(\nu_{k})_{k \in \Z \setminus \{0\}}$. We make the following assumptions on the L\'evy process $L$, the increment process $\nu$, and the parameter space $\Theta$. This ensures that the statements for the CARMA process $Y$ outlined above hold true for all choices of parameters in $\Theta$.
\begin{enumerate}[label=(\textbf{H\arabic*})]
\item $L=(L(t))_{t \in \R}$ is a two-sided L\'evy process with mean zero, variance $\sigma_{L}^{2} > 0$, and finite moment of order $4+\delta$ for some $\delta>0$. \label{H1}
\item $\nu=(\nu_{k})_{k \in \Z \setminus \{0\}}$ is a sequence of non-negative, independent and identically distributed (i.i.d.)\ random variables with distribution function $F$. Each $\nu_{k}$ is absolutely continuous with bounded and continuous density function $f$, has mean $1/\beta$, variance $\eta^{2}$, and a finite fourth moment. Furthermore, $\nu$ is independent of $L$. \label{H2}  
\item The parameter space $\Theta$ is a compact subset of $\R^{p+q}$. Additionally, for all $\boldsymbol{\theta} = (a_{1}, \dots, a_{p}, b_{0}, \dots, b_{q-1})^{\top} \in \Theta$ the polynomials $a$ and $b$ have no common zeros and the zeros of $a$ have all strictly negative real part. Finally, the true parameter $\boldsymbol{\theta}_{0}$ belongs to the interior of $\Theta$. \label{H3}
\end{enumerate}
We define the renewal sampling sequence $\tau=(\tau_{k})_{k \in \Z}$ by
\begin{equation} \label{renewal_sampling_sequence}
\tau_{k} \coloneqq \begin{cases}
  \sum_{j=k}^{-1} (-\nu_{j}), \qquad &k \in \Z \setminus \N_{0}, \\
  0, \qquad & k=0, \\
  \sum_{j=1}^{k} \nu_{j}, \qquad &k \in \N. \\
\end{cases}
\end{equation}
Associated with $\tau$ there is a renewal process $N=(N(t))_{t \in \R}$ given by
\begin{equation*}
  N(t) \coloneqq \begin{cases}
  \sum_{k=-\infty}^{0} (-\I_{\{ \tau_{k} > t \}}), \qquad &t<0, \\
  1, \qquad & t=0, \\
  \sum_{k=0}^{\infty} \I_{\{ \tau_{k} \leq t \}}, \qquad &t>0. \\
\end{cases}
\end{equation*}
The renewal function $R$ is defined for all $t>0$ by $R(t) \coloneqq \bE{N(t)}$. Using \ref{H2} we have that
\begin{equation*}
  R(t) = 1 + \sum_{k=1}^{\infty} F^{\ast k}(t),
\end{equation*}  
where $F^{\ast k}$ is the distribution function of $\tau_{k}$, that is, the $k$-fold convolution of $F$. Since $F$ is absolutely continuous, $N$ admits the renewal density
\begin{equation*}
  r(t) = \sum_{k=1}^{\infty} f^{\ast k}(t),
\end{equation*}
where $f^{\ast k}$ is the $k$-fold convolution of $f$. The continuity of $f$ and the renewal equation (see e.g.\ (5.29) of \citet{Alsmeyer-2012}) imply that $r$ is continuous. Furthermore, since $f$ is bounded by \ref{H2}, the renewal density theorem (see Theorem 5.26 in \citet{Alsmeyer-2012}) shows that $r$ is also bounded, which ensures that the integral below is well-defined. The sampled process $Z=(Z(t))_{t \in \R}$ is given by
\begin{equation*}
  Z(t) \coloneqq \begin{cases}
  \sum_{k=-\infty}^{0} Y(\tau_{k}) \I_{\{ \tau_{k} > t \}}, \qquad &t<0, \\
  Y(0), \qquad & t=0, \\
  \sum_{k=0}^{\infty} Y(\tau_{k}) \I_{\{ \tau_{k} \leq t \}}, \qquad &t>0. \\
\end{cases}
\end{equation*}
The reduced covariance measure $\mu_{N}$ of the renewal process $N$ is defined (up to the factor $\beta$) as in \citet{Lii-1992} by
\begin{equation*}
  \mu_{N}(B) = \delta_{0}(B) + \int_{B} r(\abs{h}) \, dh
\end{equation*}
for all Borel sets $B$. As in \citet{Lii-1992}, the spectral density $\phi_{Z}$ of $Z$ is given for all $u \in \R$ by
\begin{align}
  \phi_{Z}(u) &= \frac{1}{2 \pi} \int_{-\infty}^{\infty} e^{-\iu hu} \gamma_{Y}(h) \, d\mu_{N}(h) \nonumber \\
  &= \frac{1}{2 \pi} \biggl( \sigma_{L}^{2} \mathbf{b}^{\top} \Sigma \mathbf{b} + \int_{-\infty}^{\infty} e^{-\iu hu} \gamma_{Y}(h) r(\abs{h}) \, dh \biggr). \label{spectral_density_Z}
\end{align}
If $\nu_{k}$ are exponentially distributed, then $r(\cdot) \equiv \beta$ and, using \eqref{spectral_density}, the spectral density reduces to
\begin{equation} \label{spectral_density_Z_exponential_sampling}
  \phi_{Z}(u) = \frac{\beta \sigma_{L}^{2}}{2 \pi} \biggl( \frac{\mathbf{b}^{\top} \Sigma \mathbf{b}}{\beta} + \biggl \lvert \frac{b(\iu u)}{a(\iu u)} \biggr \rvert^{2} \biggr).
\end{equation}
We also write $\phi_{Y}(u,\boldsymbol{\theta})$ and $\phi_{Z}(u,\boldsymbol{\theta})$ for $\phi_{Y}(u)$ and $\phi_{Z}(u)$ to highlight the dependence on the parameter $\boldsymbol{\theta}$. We recall that the true parameter is denoted by $\boldsymbol{\theta}_{0}$. We define a Whittle-type estimator following \citet{Lii-1992}. First, we let
\begin{equation} \label{function_g}
  g(u, \boldsymbol{\theta}) = \frac{\phi_{Z}(u, \boldsymbol{\theta})}{s^{2}(\boldsymbol{\theta})}, \, \text{ where } \, s^{2}(\boldsymbol{\theta}) = \int_{-\infty}^{\infty} \frac{\phi_{Z}(u, \boldsymbol{\theta})}{1+u^{2}} \, du.
\end{equation}
Then, we define $K: \R^{p+q} \to \R$ by
\begin{equation*}
  K(\boldsymbol{\theta}) = \int_{-\infty}^{\infty} \frac{\log(g(u,\boldsymbol{\theta}))}{1+u^{2}} \phi_{Z}(u, \boldsymbol{\theta}_{0}) \, du.
\end{equation*}
In general, we need the following anti-aliasing assumption (see \citet{Lii-1992}, Assumption 2.1).
\begin{enumerate}[label=(\textbf{H\arabic*})]
\setcounter{enumi}{3}  
\item For all $\boldsymbol{\theta}_{1}, \boldsymbol{\theta}_{2} \in \Theta$ with $\boldsymbol{\theta}_{1} \neq \boldsymbol{\theta}_{2}$ the functions $g(\cdot, \boldsymbol{\theta}_{1})$ and $g(\cdot, \boldsymbol{\theta}_{2})$ differ on a set of positive (Lebesgue) measure. \label{H4}  
\end{enumerate}
We notice that \ref{H4} holds true when $\nu_{k}$ are exponentially distributed (see \citet{Lii-1992} pp. 61-62 and recall \eqref{spectral_density}).
\begin{proposition}[\citet{Lii-1992}] \label{proposition:function_K}
Assume \ref{H1}-\ref{H4}. The function $K$ is well-defined, continuous, and assumes a unique global maximum value at $\boldsymbol{\theta}_{0} \in \Theta$.
\end{proposition}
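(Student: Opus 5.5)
The plan rests on three properties of $\phi_{Z}(\cdot,\boldsymbol{\theta})$: it is strictly positive, bounded above and below on $\R$, and jointly continuous in $(u,\boldsymbol{\theta})$. Then $g$ is well behaved and the maximisation reduces to Jensen's (Gibbs') inequality for the probability densities $u\mapsto g(u,\boldsymbol{\theta})/(1+u^{2})$.

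\emph{Step 1: bounds and continuity of $\phi_{Z}$.} From \eqref{spectral_density_Z}, $2\pi\phi_{Z}(u,\boldsymbol{\theta}) = \gamma_{Y}(0)+\int e^{-\iu hu}\gamma_{Y}(h)r(\abs{h})\,dh$. By \ref{H3} the eigenvalues of $A(\boldsymbol{\theta})$ have strictly negative real parts and depend continuously on $\boldsymbol{\theta}$. Compactness of $\Theta$ therefore gives uniform constants $C,\lambda>0$ with $\abs{\gamma_{Y}(h,\boldsymbol{\theta})}\le Ce^{-\lambda\abs{h}}$. Since $r$ is bounded, the integrand is dominated by $C\norm{r}_\infty e^{-\lambda\abs{h}}$, so dominated convergence yields joint continuity in $(u,\boldsymbol{\theta})$ and a uniform upper bound.

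For the lower bound I would use $d\mu_{N}=\delta_{0}+r(\abs{h})\,dh$ and write $\gamma_{Y}(h)=\int e^{\iu hv}\phi_{Y}(v)\,dv$. Then $\phi_{Z}$ is the spectral density of $Z$, equal to $\frac{1}{2\pi}\gamma_Y(0)$ plus the spectral density of the non-negative definite function $h\mapsto \gamma_{Y}(h)r(\abs{h})$. That second function is non-negative definite because $r(\abs{h})$, the covariance-type density of the renewal process, comes from a positive-definite structure. Concretely, $\mu_{N}$ is a positive-definite measure (the reduced second moment measure of a stationary point process is positive definite; this is an argument in \citet{Lii-1992}), and $\gamma_{Y}$ is positive definite. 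The product $\gamma_{Y}\mu_{N}$ is then positive definite, so its Fourier transform $\phi_{Z}$ is non-negative.

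To get strict positivity I would instead bound the second term crudely. The Riemann–Lebesgue lemma gives $\phi_{Z}(u)\to\gamma_{Y}(0)/(2\pi)>0$ uniformly in $\boldsymbol{\theta}$ as $\abs{u}\to\infty$. On compacts, strict positivity would follow from continuity once pointwise positivity is shown. I expect exactly this positivity to be the main obstacle. The first thing I would try is to represent $\phi_{Z}=\phi_Y * \hat\mu$-type formulas in the manner of \citet{Lii-1992}, namely $\phi_{Z}(u)=\frac{\beta}{2\pi}\int \phi_{Y}(u-v)\,d\hat M(v)$ with a non-negative spectral measure of the point process having an atom-free part plus a Lebesgue component of density $\beta/(2\pi)$. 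Combined with $\phi_{Y}>0$ almost everywhere, this gives $\phi_Z>0$ everywhere.

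\emph{Step 2: $K$ well-defined and continuous.} With $0<c\le\phi_{Z}\le C$ uniformly, we get $s^{2}(\boldsymbol{\theta})\in[\pi c,\pi C]$, continuous in $\boldsymbol{\theta}$ by dominated convergence. Hence $\log g$ is bounded uniformly and jointly continuous. The integrand of $K$ is dominated by a constant times $(1+u^{2})^{-1}$, so $K$ is finite and continuous on $\Theta$ by dominated convergence.

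\emph{Step 3: unique maximum.} Set $p_{\boldsymbol{\theta}}(u)=g(u,\boldsymbol{\theta})/(1+u^{2})$. This is a probability density, since $\int p_{\boldsymbol{\theta}}=1$ by the definition of $s^{2}$. Then
\[
K(\boldsymbol{\theta})-K(\boldsymbol{\theta}_{0}) = s^{2}(\boldsymbol{\theta}_{0})\int p_{\boldsymbol{\theta}_{0}}(u)\log\frac{p_{\boldsymbol{\theta}}(u)}{p_{\boldsymbol{\theta}_{0}}(u)}\,du \le s^{2}(\boldsymbol{\theta}_{0})\log\int p_{\boldsymbol{\theta}}(u)\,du=0
\]
by Jensen's inequality. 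Equality holds iff $p_{\boldsymbol{\theta}}=p_{\boldsymbol{\theta}_{0}}$ almost everywhere, i.e.\ iff $g(\cdot,\boldsymbol{\theta})=g(\cdot,\boldsymbol{\theta}_{0})$ almost everywhere. By \ref{H4} this forces $\boldsymbol{\theta}=\boldsymbol{\theta}_{0}$. Hence $\boldsymbol{\theta}_{0}$ is the unique global maximiser.
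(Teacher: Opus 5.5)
Your architecture is the Kullback--Leibler argument of \citet{Lii-1992}, which is what the paper relies on. The paper gives no proof of its own beyond the regularity facts in Appendix~\ref{sec:regularity}. Your Steps~2 and~3, including the use of \ref{H4} in the equality case of Jensen's inequality, are correct once $\phi_{Z}$ is bounded away from zero uniformly in $(u,\boldsymbol{\theta})$.

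That lower bound is where your proposal has a genuine gap. You are right that it is the crux, and the paper is thin there too: Appendix~\ref{sec:equicontinuity} only uses $h(\boldsymbol{\theta})=\mathbf{b}^{\top}\Sigma\mathbf{b}>0$, which does not control the possibly negative term $g_{1}$. Both justifications you sketch fail as stated for general renewal sampling.

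First, $h\mapsto\gamma_{Y}(h)r(\abs{h})$ need not be non-negative definite; only the full measure $\mu_{N}=\delta_{0}+r(\abs{h})\,dh$ is. Second, the spectral measure of $\mu_{N}$ is not an atom plus Lebesgue measure with constant density; that holds only in the Poisson case. With $\hat f(w)\coloneqq\mathbf{E}[e^{-\iu w\nu_{1}}]$ one has, at least formally, $2\pi\phi_{Z}(u)=\int\phi_{Y}(v)\varrho(u-v)\,dv+2\pi\beta\phi_{Y}(u)$, where
\[
\varrho(w)=1+2\,\mathrm{Re}\,\frac{\hat f(w)}{1-\hat f(w)}=\frac{1-\abs{\hat f(w)}^{2}}{\abs{1-\hat f(w)}^{2}},\qquad w\neq0.
\]
This equals $1$ for exponential $\nu_{k}$. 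For nearly periodic sampling, however, it is close to $0$ at some frequencies: it equals $(1-c)/(1+c)$ where $\hat f(w)=-c$, and $\varrho-1<0$ there. That is also why the first claim fails. What your argument really needs is $\inf_{w\neq0}\varrho(w)>0$, and this must be proved.

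The bound is true, and there is a short rigorous route using only objects from the paper. By independence of $Y$ and $\tau$ and $\gamma_{Y}(h)=\int e^{\iu hv}\phi_{Y}(v)\,dv$,
\[
2\pi\,\mathbf{E}[I_{Z,n}(u)]=\int_{-\infty}^{\infty}\phi_{Y}(v)\,\frac{1}{n}\mathbf{E}\biggl[\biggl\lvert\sum_{k=1}^{n}e^{-\iu(u-v)\tau_{k}}\biggr\rvert^{2}\biggr]\,dv,
\]
and for $w\neq0$
\[
\frac{1}{n}\mathbf{E}\biggl[\biggl\lvert\sum_{k=1}^{n}e^{-\iu w\tau_{k}}\biggr\rvert^{2}\biggr]=1+\frac{2}{n}\,\mathrm{Re}\sum_{m=1}^{n-1}(n-m)\hat f(w)^{m}\xrightarrow[n\to\infty]{}\varrho(w).
\]
The integrand is non-negative. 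So Fatou's lemma, together with $\mathbf{E}[I_{Z,n}(u)]\to\phi_{Z}(u,\boldsymbol{\theta})$, yields $2\pi\phi_{Z}(u,\boldsymbol{\theta})\geq\int\phi_{Y}(v,\boldsymbol{\theta})\varrho(u-v)\,dv$. The convergence of $\mathbf{E}[I_{Z,n}(u)]$ is the computation behind \eqref{expectation_periodogram}, which works at every $\boldsymbol{\theta}\in\Theta$.

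Since $\nu_{1}$ has a density, $\abs{\hat f(w)}<1$ for $w\neq0$ and $\hat f(w)\to0$ as $\abs{w}\to\infty$. Hence $\varrho\geq(1-\abs{\hat f}^{2})/4$ is bounded below away from the origin. Near the origin, the finite variance $\eta^{2}>0$ gives $\varrho(w)\to\beta^{2}\eta^{2}>0$. Therefore $c_{\varrho}\coloneqq\inf_{w\neq0}\varrho(w)>0$ and
\[
\phi_{Z}(u,\boldsymbol{\theta})\geq\frac{c_{\varrho}}{2\pi}\,\gamma_{Y}(0,\boldsymbol{\theta})=\frac{c_{\varrho}\sigma_{L}^{2}}{2\pi}\,\mathbf{b}^{\top}\Sigma\mathbf{b}.
\]
This is bounded below uniformly in $u\in\R$ and, by continuity and compactness, in $\boldsymbol{\theta}\in\Theta$. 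It also makes your Riemann--Lebesgue-plus-compactness detour unnecessary. With this bound in place, Steps~2 and~3 go through as you wrote them.
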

Since the spectral density $\phi_{Z}$ is generally unknown, we estimate it using the periodogram $I_{Z,N(T)}$, where for all $n \in \N$
\begin{equation} \label{periodogram}
  I_{Z,n}(u) = \frac{1}{2 \pi n} \biggl \lvert \sum_{k=1}^{n} e^{-\iu u \tau_{k}} Y(\tau_{k}) \biggr \rvert^{2}.
\end{equation}  
This leads to the estimator $\hat{K}_{n}$ for $K$ given by
\begin{equation*}
  \hat{K}_{n}(\boldsymbol{\theta}) = \int_{-\infty}^{\infty} \frac{\log(g(u,\boldsymbol{\theta}))}{1+u^{2}} I_{Z,n}(u) \, du.
\end{equation*}
An estimator $\hat{\boldsymbol{\theta}}_{n}$ of $\boldsymbol{\theta}_{0}$ is any point in $\Theta$ maximizing $\hat{K}_{n}$. Similarly, the estimator $\hat{\boldsymbol{\theta}}_{N(T)}$ is a maximizer of $\hat{K}_{N(T)}$.
We establish asymptotic normality of both $(\hat{\boldsymbol{\theta}}_{n})_{n \in \mathbb{N}}$ and $(\hat{\boldsymbol{\theta}}_{N(T)})_{T \geq 0}$. The covariance matrix is given by $\Sigma_{0} \coloneqq W^{-1}QW^{-1}$, where the entries of $W$ are the second order partial derivative of $K(\cdot)$ (see \eqref{entries_W} below for more details) and the entries of $Q$ are given by \eqref{entries_Q} below.

\begin{theorem} \label{theorem:asymptotic_normality}
  Assume \ref{H1}-\ref{H4} and suppose that the matrix $W$ is invertible. Then
\begin{align*}
  \text{(i) } &\sqrt{n} (\hat{\boldsymbol{\theta}}_{n} - \boldsymbol{\theta}_{0}) \xrightarrow[n \to \infty]{d} \mathcal{N}(\boldsymbol{0}, \Sigma_{0}) \text{ and} \\
  \text{(ii) } &\sqrt{N(T)} (\hat{\boldsymbol{\theta}}_{N(T)} - \boldsymbol{\theta}_{0}) \xrightarrow[T \to \infty]{d} \mathcal{N}(\boldsymbol{0}, \Sigma_{0}).
\end{align*}
\end{theorem}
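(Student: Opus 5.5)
We follow the classical M-estimation route for Whittle-type estimators. The argument needs two ingredients about the integrated periodogram $\int \psi(u) I_{Z,n}(u)\,du$ for suitable weight functions $\psi$.

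The first ingredient is a uniform law of large numbers. Uniformly in $\boldsymbol{\theta}\in\Theta$,
\begin{equation*}
\hat K_n(\boldsymbol{\theta}) - \frac{1}{\beta}K(\boldsymbol{\theta}) \to 0 \quad \text{in probability}.
\end{equation*}
The normalisation constant, here written $1/\beta$, must be matched to the convention in $\mu_N$. Any positive constant is harmless, since it rescales $W$ and $Q$ in the same way and cancels in $W^{-1}QW^{-1}$.

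The second ingredient is a joint central limit theorem. For the weights $\psi_j(u)=\partial_{\theta_j}\log g(u,\boldsymbol{\theta}_0)/(1+u^2)$, we need
\begin{equation*}
\sqrt n\Bigl(\int \psi_j(u)\bigl(I_{Z,n}(u)-c\,\phi_Z(u,\boldsymbol{\theta}_0)\bigr)\,du\Bigr)_{j}
\end{equation*}
to be asymptotically $\mathcal N(\boldsymbol 0,Q)$, where $c$ is the same constant as above.

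Given these, consistency of $\hat{\boldsymbol{\theta}}_n$ follows from Proposition~\ref{proposition:function_K}: $K$ is continuous with a unique maximum at $\boldsymbol{\theta}_0$, $\Theta$ is compact, and the ULLN holds, so the standard argmax argument applies.

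For uniformity, expand
\begin{equation*}
\int \psi(u) I_{Z,n}(u)\,du=\frac{1}{2\pi n}\sum_{k,l=1}^n Y(\tau_k)Y(\tau_l)\,\hat\psi(\tau_k-\tau_l),
\end{equation*}
where $\hat\psi$ is the Fourier transform of $\psi$. Because $\log g(u,\boldsymbol{\theta})$ is bounded and smooth in $\boldsymbol{\theta}$ uniformly on the compact $\Theta$ (note $\phi_Z\ge \sigma_L^2\mathbf b^\top\Sigma\mathbf b/(2\pi)>0$ and $\phi_Z$ grows at most to a constant), the family of weights is equicontinuous. Pointwise convergence plus equicontinuity then upgrades to uniform convergence.

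For asymptotic normality, Taylor-expand the score at $\boldsymbol{\theta}_0$:
\begin{equation*}
0=\nabla\hat K_n(\hat{\boldsymbol{\theta}}_n)=\nabla\hat K_n(\boldsymbol{\theta}_0)+\nabla^2\hat K_n(\bar{\boldsymbol{\theta}}_n)(\hat{\boldsymbol{\theta}}_n-\boldsymbol{\theta}_0).
\end{equation*}
This uses that $\boldsymbol{\theta}_0$ is an interior point, so $\hat{\boldsymbol{\theta}}_n$ is eventually interior with probability tending to one.

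Since $\boldsymbol{\theta}_0$ maximises $K$, we have $\nabla K(\boldsymbol{\theta}_0)=0$. Differentiating under the integral therefore gives $\int \psi_j\,\phi_Z(\cdot,\boldsymbol{\theta}_0)=0$, so $\sqrt n\,\nabla\hat K_n(\boldsymbol{\theta}_0)$ is exactly the centred vector in the CLT. The ULLN applied to the second-derivative weights, together with consistency, gives $\nabla^2\hat K_n(\bar{\boldsymbol{\theta}}_n)\to cW$. Inverting $W$ and applying Slutsky's lemma yields (i).

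For (ii), $N(T)/T\to\beta$ almost surely by the renewal theorem, and $N(T)$ is a stopping-type random index. We would establish the CLT for the integrated periodogram with index $N(T)$ using an Anscombe-type argument. The ingredient is tightness of the increments: $\max_{|m-n|\le\epsilon n}\sqrt n\,|S_m-S_n|/n$ must be small, which follows from a maximal inequality based on the fourth-moment bounds. Alternatively, one can work directly with $\tau$ observed on $[0,T]$, conditioning on $\nu$. Either way, the CLT and ULLN transfer to $N(T)$, and the Taylor argument is then repeated verbatim.

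The main obstacle is the CLT for $\sqrt n\int\psi(I_{Z,n}-c\phi_Z)$ under only $4+\delta$ moments. We would write the statistic as the quadratic form
\begin{equation*}
\frac1n\sum_{k,l}Y(\tau_k)Y(\tau_l)\hat\psi(\tau_k-\tau_l).
\end{equation*}
The kernel $\hat\psi$ decays exponentially: $\psi$ is $(1+u^2)^{-1}$ times a smooth rational-type function, so $\hat\psi$ is a combination of exponentials and a possible point mass. We would truncate to $|k-l|\le m$ and control the remainder in $L^2$ via covariance bounds, using fourth cumulants of $Y$ and exponential decay in $\tau_k-\tau_l$.

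The truncated sum is a sum over $k$ of the $m$-dependent-type functionals $\sum_{|l-k|\le m}Y(\tau_k)Y(\tau_l)\hat\psi(\tau_k-\tau_l)$ of the stationary sequence $(Y(\tau_k),\nu_k)$. This sequence is strongly mixing with exponential rate: it combines the exponential mixing of $Y$ (Marquardt) with i.i.d.\ increments, and sampling at independent renewal times preserves mixing since $\tau_{k+j}-\tau_k\ge$ a sum of $j$ i.i.d.\ positive variables. A mixing CLT (Ibragimov) with $2+\delta/2$ moments of these products then applies, and $4+\delta$ moments of $L$ are exactly what this needs.

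Finally, we identify $Q$ as the limit of the covariances. These involve the fourth cumulant of $L$ through $\gamma$ and the renewal density $r$, which is what \eqref{entries_Q} should encode. Computing this limit carefully, with renewal-measure integrals of three-point functions, is the most technical part.
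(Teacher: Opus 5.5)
Your proposal follows the paper's proof essentially step for step. The shared steps are:
\begin{itemize}
\item consistency from Proposition~\ref{proposition:function_K} together with equicontinuity of $\log g$;
\item a mean-value expansion of the score at the interior point $\boldsymbol{\theta}_0$;
\item a CLT for the integrated periodogram, via lag truncation at $m$, Ibragimov's CLT for the stationary exponentially mixing sequence $S_j(m)$ with $2+\delta/2$ moments, an $m\to\infty$ approximation and Cram\'er--Wold;
\item a R\'enyi/Anscombe argument for the random index $N(T)$.
\end{itemize}
Obtaining $\nabla K(\boldsymbol{\theta}_0)=0$ from the interior maximum, rather than from $\int g(u,\boldsymbol{\theta}_0)/(1+u^2)\,du=1$ as the paper does, is equally valid. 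Several supporting claims need correcting, although none of them breaks the argument.

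(a) \emph{Rationality of $\psi$.} For a general renewal density $\psi$ is not rational, because $\phi_Z$ contains $\int e^{-\iu hu}\gamma_Y(h)r(\abs{h})\,dh$. So ``$\hat\psi$ is a combination of exponentials'' is justified only for exponential inter-arrival times. A point mass never occurs, since $\psi\in L^1$.
\begin{itemize}
\item Exponential decay of $\hat\psi$ can still be recovered: bounded $r$ and exponentially decaying $\gamma_Y$ make $\phi_Z(\cdot,\boldsymbol{\theta}_0)$ analytic and bounded away from zero in a thin strip around the real axis.
\item No rate is actually needed. The paper uses only that $\hat G_R$ is bounded and in $L^2$ and that $r$ is bounded.
\item Those two facts are exactly what controls the covariance terms where $\gamma_Y$ gives no decay. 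Examples are $\gamma_Y^2(0)\int_0^\infty \hat G_R^2(s)r(s)\,ds$ from $\mathcal{C}_5$, and the cases $\mathcal{C}_6$, $\mathcal{C}_7$, $\mathcal{C}_{14}$--$\mathcal{C}_{16}$.
\item All other terms decay like $(\mathcal{L}(D))^k$, because $\tau_k$ is a sum of $k$ i.i.d.\ increments.
\end{itemize}

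(b) \emph{Lower bound on $\phi_Z$.} The bound $\phi_Z\ge\sigma_L^2\mathbf{b}^\top\Sigma\mathbf{b}/(2\pi)$ holds for exponential sampling but fails in general. The Fourier transform of $\gamma_Y(h)r(\abs{h})$ can be negative. For example, take an Ornstein--Uhlenbeck process with inter-arrival density concentrated near $1$: at $u=\pi$ that term is approximately $-\sigma_L^2/(e+1)$ when $\theta=1$. What you actually need is only that $g$ is bounded away from $0$ and $\infty$, uniformly in $(u,\boldsymbol{\theta})$.

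(c) \emph{Normalisation and the ULLN.} With the normalisation $1/(2\pi n)$ one has $\bE{I_{Z,n}(u)}=\phi_Z(u,\boldsymbol{\theta}_0)+O(1/n)$, so your constant is $c=1$. Your ULLN also needs tightness of $\int I_{Z,n}(u)/(1+u^2)\,du$. Equicontinuity of the weights bounds $\abs{\hat K_n(\boldsymbol{\theta}_1)-\hat K_n(\boldsymbol{\theta}_2)}$ only up to this factor. The paper obtains the tightness from the CLT with $G(u)=1/(1+u^2)$.

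(d) \emph{Random index in part (ii).} Apply the random-index argument to the truncated sums $\sum_{j\le n}S_j(m)$, not to the untruncated statistic.
\begin{itemize}
\item These are genuine partial sums of a stationary mixing sequence, so Yokoyama's moment inequality gives the required maximal bound.
\item Then control $\mathcal{S}_{N(T)}-\tilde{\mathcal{S}}_{N(T),m}$ uniformly over $n\ge(\beta-\delta)T$, as the paper does.
\item $nJ_n$ itself is not a partial sum, since each new observation is paired with all earlier ones. Your increment condition for the untruncated statistic is therefore not a direct consequence of a standard maximal inequality.
\end{itemize}
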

\noindent An immediate consequence of Theorem \ref{theorem:asymptotic_normality} is the consistency of the estimators $(\hat{\boldsymbol{\theta}}_{n})_{n \in \mathbb{N}}$ and $(\hat{\boldsymbol{\theta}}_{N(T)})_{T \geq 0}$.
\begin{corollary} \label{corollary:asymptotic_normality}
  Assume \ref{H1}-\ref{H4}. Then, $\hat{\boldsymbol{\theta}}_{n} \xrightarrow[n \to \infty]{p} \boldsymbol{\theta}_{0}$ and $\hat{\boldsymbol{\theta}}_{N(T)}  \xrightarrow[T \to \infty]{p} \boldsymbol{\theta}_{0}$.
\end{corollary}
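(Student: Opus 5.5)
The plan is to prove consistency directly with the classical argmax argument rather than deduce it from Theorem \ref{theorem:asymptotic_normality}, since the corollary does not assume that $W$ is invertible. The argument rests on two facts. First, by Proposition \ref{proposition:function_K}, $K$ is continuous on the compact set $\Theta$ and has a unique maximiser $\boldsymbol{\theta}_{0}$. Hence for every $\varepsilon>0$,
\begin{equation*}
\eta_{\varepsilon} \coloneqq K(\boldsymbol{\theta}_{0}) - \sup_{\boldsymbol{\theta} \in \Theta,\, \norm{\boldsymbol{\theta}-\boldsymbol{\theta}_{0}} \geq \varepsilon} K(\boldsymbol{\theta}) > 0 .
\end{equation*}
Second, if $\sup_{\boldsymbol{\theta} \in \Theta} \abs{\hat{K}_{n}(\boldsymbol{\theta}) - K(\boldsymbol{\theta})} < \eta_{\varepsilon}/2$, then any maximiser $\hat{\boldsymbol{\theta}}_{n}$ satisfies $\norm{\hat{\boldsymbol{\theta}}_{n}-\boldsymbol{\theta}_{0}}<\varepsilon$. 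So it suffices to show $\sup_{\boldsymbol{\theta}} \abs{\hat{K}_{n} - K} \to 0$ in probability, and likewise with $n$ replaced by $N(T)$.

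To obtain uniform convergence, write
\begin{equation*}
\hat{K}_{n}(\boldsymbol{\theta}) - K(\boldsymbol{\theta}) = \int_{-\infty}^{\infty} w_{\boldsymbol{\theta}}(u) \bigl( I_{Z,n}(u) - \phi_{Z}(u,\boldsymbol{\theta}_{0}) \bigr) \, du, \qquad w_{\boldsymbol{\theta}}(u) = \frac{\log g(u,\boldsymbol{\theta})}{1+u^{2}}.
\end{equation*}
I first check that $\log g$ is bounded on $\R \times \Theta$ and uniformly equicontinuous in $\boldsymbol{\theta}$. By \eqref{spectral_density_Z}, $\phi_{Z}(u,\boldsymbol{\theta})$ is bounded below by $\sigma_{L}^{2}\mathbf{b}^{\top}\Sigma\mathbf{b}/(2\pi)>0$. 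It is bounded above by $\gamma_{Y}(0)\bigl(1+\int \abs{\gamma_Y} r\bigr)$-type quantities, using that $r$ is bounded and $\gamma_{Y}$ decays exponentially. All of these depend continuously on $\boldsymbol{\theta}$ over the compact $\Theta$, where the eigenvalues of $A$ stay uniformly in the open left half plane. The same holds for $s^{2}(\boldsymbol{\theta})$. Hence $\abs{w_{\boldsymbol{\theta}}(u)} \leq C/(1+u^{2})$. Moreover, for every $\delta>0$ there is $\rho>0$ with $\abs{w_{\boldsymbol{\theta}}(u)-w_{\boldsymbol{\theta}'}(u)} \leq \delta/(1+u^{2})$ whenever $\norm{\boldsymbol{\theta}-\boldsymbol{\theta}'}<\rho$.

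Next, cover $\Theta$ by finitely many $\rho$-balls with centres $\boldsymbol{\theta}_{1},\dots,\boldsymbol{\theta}_{m}$. Then
\begin{equation*}
\sup_{\boldsymbol{\theta}}\abs{\hat{K}_{n}-K} \leq \max_{j\leq m} \abs{\hat{K}_{n}(\boldsymbol{\theta}_{j})-K(\boldsymbol{\theta}_{j})} + \delta \int \frac{I_{Z,n}(u)+\phi_{Z}(u,\boldsymbol{\theta}_{0})}{1+u^{2}}\,du .
\end{equation*}
The last integral is $O_{p}(1)$, because its expectation is bounded. Indeed, $\bE{I_{Z,n}(u)} = (2\pi n)^{-1}\sum_{j,k}\bE{e^{-\iu u(\tau_k-\tau_j)}\gamma_Y(\tau_k-\tau_j)}$ is bounded by $(2\pi)^{-1}\bigl(\gamma_Y(0)+2\int_0^\infty \abs{\gamma_Y(h)} r(h)\,dh\bigr)$, uniformly in $u$ and $n$, by independence of $\nu$ and $L$ and boundedness of $r$. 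Letting $\delta\to 0$ then reduces uniform convergence to pointwise convergence at each fixed $\boldsymbol{\theta}_{j}$.

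The main step is this pointwise convergence, $\int w_{\boldsymbol{\theta}}(u)\bigl(I_{Z,n}(u)-\phi_{Z}(u,\boldsymbol{\theta}_{0})\bigr)du \to 0$ in probability for fixed $\boldsymbol{\theta}$. Here I would invoke the asymptotic normality of the integrated periodogram from Section \ref{sec:asymptotic_normality_periodogram}, applied to the bounded, integrable weight $w_{\boldsymbol{\theta}}$. It gives $\sqrt{n}$ times this difference converging in distribution, so in particular the difference itself tends to $0$ in probability. The same result is stated there for the random index $N(T)$, which handles the second claim without a separate Anscombe-type argument. If a version with the random index were not available, I would fall back on a direct variance bound, showing $\bV{\int w_{\boldsymbol{\theta}} I_{Z,n}} = O(1/n)$ via the mixing of $Y$ and the fourth moments from \ref{H1}--\ref{H2}. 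I would then transfer it to $N(T)$ using $N(T)/T \to \beta$ a.s. and a maximal inequality over $n$ in a window around $\beta T$. I expect this random-index transfer to be the delicate point.
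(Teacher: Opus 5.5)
Your route is the paper's. The paper also proves consistency directly, not via Theorem~\ref{theorem:asymptotic_normality}: that theorem's proof uses consistency, and $W$ is not assumed invertible here. It follows Theorem 2.2 of \citet{Lii-1992}, combining four ingredients: the pointwise limit $\hat{K}_{n}(\boldsymbol{\theta}) \to K(\boldsymbol{\theta})$ from Lemma~\ref{lemma:clt}, a bound on $\bE{I_{Z,n}(u)}$ (cf.\ \eqref{expectation_periodogram}), the uniform equicontinuity \eqref{equicontinuity_log_g}, and the unique maximum from Proposition~\ref{proposition:function_K}. Your finite-cover and well-separated-maximum write-up is a correct way of carrying this out.

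One supporting claim is false as written, however. The bound $\phi_{Z}(u,\boldsymbol{\theta}) \geq \sigma_{L}^{2}\mathbf{b}^{\top}\Sigma\mathbf{b}/(2\pi)$ presumes $\int e^{-\iu hu}\gamma_{Y}(h) r(\abs{h})\,dh \geq 0$. That holds under exponential sampling, where the integral equals $2\pi\beta\phi_{Y}(u)$, but not for general renewal sampling. For example, take an Ornstein--Uhlenbeck process with inter-arrival times concentrated near $1$. At $u=\pi$ the integral is close to $\frac{\sigma_{L}^{2}}{\theta}\sum_{m\geq 1}(-1)^{m}e^{-\theta m} < 0$.

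So the uniform lower bound on $g$, which you need for $\abs{w_{\boldsymbol{\theta}}(u)} \leq C/(1+u^{2})$, must come from elsewhere. You can take the uniform two-sided bounds on $g$ stated in Appendix~\ref{sec:equicontinuity}. Alternatively, prove strict positivity of $\phi_{Z}$ (e.g.\ via positive definiteness of the covariance measure of the renewal process), and combine it with joint continuity on $[-N,N]\times\Theta$ and with \eqref{g1_vanishing} for large $\abs{u}$.

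That appendix argument, namely the decay of $g_{1}$ as $\abs{u}\to\infty$ uniformly in $\boldsymbol{\theta}$, is also what makes your equicontinuity claim uniform over the non-compact range $u \in \R$. Continuity on the compact $\Theta$ alone does not give it.

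Finally, for the random index, your bounded-expectation argument for $\int I_{Z,N(T)}(u)/(1+u^{2})\,du$ does not transfer verbatim, because $N(T)$ is a function of $\tau$. Instead use Lemma~\ref{lemma:clt}~(ii) with $G(u)=1/(1+u^{2})$, as the paper does. This gives convergence in probability and hence the needed $O_{p}(1)$ bound.
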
  
\noindent The proof of Theorem \ref{theorem:asymptotic_normality} is divided into several steps. We begin by showing in Section \ref{sec:asymptotic_normality_periodogram} below that the integrated periodogram is asymptotically normal. The proof of Theorem \ref{theorem:asymptotic_normality} is contained in Section \ref{sec:asymptotic_normality_estimator}. We critically use that for $\boldsymbol{\theta}_{0} \in \Theta$ the CARMA process $Y$ is strongly mixing with exponentially decaying coefficients. It follows from Proposition 4.1 of \citet{Brandes-2023} that both $(Y(\tau_{k}), \tau_{k}-\tau_{k-1})_{k \in \Z}$ and $(Y(\tau_{k}))_{k \in \Z}$ are strongly mixing with exponentially decaying coefficients. In particular, they are ergodic.

Another important aspect of our analysis is the estimation of the variance $\sigma_{L}^{2}$ of the driving L\'evy process. To this end, we follow the approach outlined on p.\ 68 by \citet{Lii-1992}. We begin by considering the estimator
\begin{equation*}
  \hat{s}_{n}^{2}(\boldsymbol{\theta}_{0}) = \int_{-\infty}^{\infty} \frac{I_{Z,n}(u)}{1+u^{2}} \, du
\end{equation*}
for $s^{2}(\boldsymbol{\theta}_{0})$ in \eqref{function_g}. Using the notation $s^{2}(\boldsymbol{\theta}_{0}, \sigma_{L}^{2})$ for $s^{2}(\boldsymbol{\theta}_{0})$ to highlight the dependence on $\sigma_{L}^{2}$, we see that $s^{2}(\boldsymbol{\theta}_{0}, \sigma_{L}^{2}) = \sigma_{L}^{2} \tilde{s}^{2}(\boldsymbol{\theta}_{0})$, where
\begin{equation*}
  \tilde{s}^{2}(\boldsymbol{\theta}_{0}) = \int_{-\infty}^{\infty} \frac{\tilde{\phi}_{Z}(u, \boldsymbol{\theta}_{0})}{1+u^{2}} \, du \text{ and } \tilde{\phi}_{Z}(u, \boldsymbol{\theta}_{0}) = \frac{\phi_{Z}(u, \boldsymbol{\theta}_{0})}{\sigma_{L}^{2}}
\end{equation*}
no longer depend on $\sigma_{L}^{2}$. Replacing $\boldsymbol{\theta}_{0}$ by $\hat{\boldsymbol{\theta}}_{n}$ we obtain the estimator
\begin{equation*}
  \hat{\sigma}_{L,n}^{2} = \frac{\hat{s}_{n}^{2}(\boldsymbol{\theta}_{0})}{\tilde{s}^{2}(\hat{\boldsymbol{\theta}}_{n})}.
\end{equation*}
for the variance of $L$. Since $\tilde{s}^{2}(\cdot)$ is positive and continuous (see Appendix \ref{sec:equicontinuity} for more details), Corollary \ref{corollary:asymptotic_normality} yields that $\tilde{s}^{2}(\hat{\boldsymbol{\theta}}_{n}) \xrightarrow[n \to \infty]{p} \tilde{s}^{2}(\boldsymbol{\theta}_{0})$. Using this and \eqref{convergence_s2} below, we deduce that
\begin{equation*}
  \hat{\sigma}_{L,n}^{2} \xrightarrow[n \to \infty]{p} \sigma_{L}^{2}
\end{equation*}
establishing the consistency of the variance estimator. Exactly the same result holds when $n$ is replaced by $N(t)$ and $t \to \infty$.

We conclude this section by discussing some special cases and simulation studies. In particular, the Ornstein-Uhlenbeck process is obtained as a particular case of the CARMA process by taking $p=1$ and $q=0$. Using \eqref{CARMA} with $a(z)=z+a_{1}$ and $b(z)=b_{0}=1$, we obtain that
\begin{equation*}
  Y(t) = \int_{-\infty}^{t} e^{-a_{1}(t-u)} \, dL(u).
\end{equation*}
In this case the parameter is $\theta=a_{1}>0$ and we denote by $\theta_{0}$ the true parameter. We see from \eqref{autocovariance} and \eqref{spectral_density} that $Y$ has autocovariance function
\begin{equation*}
  \gamma_{Y}(h) = \frac{\sigma_{L}^{2} e^{-\theta \abs{h}}}{2\theta}
\end{equation*}
and spectral density
\begin{equation*}
  \phi_{Y}(u,\theta) = \frac{\sigma_{L}^{2}}{2 \pi} \, \frac{1}{\theta^{2}+u^{2}},
\end{equation*}
where we have used that $\Sigma=(2\theta)^{-1}$. If $\nu_{k}$ are exponentially distributed with mean $(\beta)^{-1}$, using \eqref{spectral_density_Z_exponential_sampling}, we see that the sampled process $Z$ has spectral density
\begin{equation*}
  \phi_{Z}(u,\theta) = \frac{\beta \sigma_{L}^{2}}{2 \pi} \biggl( \frac{1}{2\theta \beta} + \frac{1}{\theta^{2}+u^{2}} \biggr).
\end{equation*}
Finally, using \eqref{function_g} and an integral calculation, we have
\begin{equation*}
  g(u,\theta) = \biggl( \frac{1}{2\theta \beta} + \frac{1}{\theta^{2}+u^{2}} \biggr) \biggl/ \biggl( \frac{\pi}{2 \theta \beta} + \frac{\pi}{\theta(\theta+1)} \biggr).
\end{equation*}
As before
\begin{equation*}
  K(\theta) = \int_{-\infty}^{\infty} \frac{\log(g(u,\theta))}{1+u^{2}} \phi_{Z}(u,\theta_{0}) \, du
\end{equation*}
is estimated by
\begin{equation*}
  \hat{K}_{n}(\theta) = \int_{-\infty}^{\infty} \frac{\log(g(u,\theta))}{1+u^{2}} I_{Z,n}(u) \, du.
\end{equation*}
We simulate the Ornstein-Uhlenbeck process assuming the driving L\'evy process is (i) a standard Brownian motion or (ii) a Gamma process with shape parameter $0.2$ and rate parameter $0.3$ as in Section 14.2 of \citet{Brockwell-2024} and centered so that the mean is zero. To simulate the process we use the R-programs in Section 19.5 of \citet{Brockwell-2024} with discretization step size $h=10^{-3}$. The above integrals are approximated using a trapezoidal rule and the sample size is set to $n \in \{ 100, 1000 \}$. The true parameter value is taken to be $\theta_{0}=1$ and the inter-arrival times are assumed to be exponentially distributed with parameter $\beta \in \{ 0.5, 1, 2, 5 \}$. Finally, the sampled process is obtained by linear interpolation of the discretized process. The procedure is repeat $100$ times and the mean and variance of the estimated parameter $\hat{\theta}_{n}$ are summarized in Tables \ref{table_1} and \ref{table_2} for the Brownian motion and Gamma process driven models, respectively. The simulations clearly show that bias and the standard deviations decrease for increasing sample size.

\begin{table}[h]
\centering
\begin{tabular}{|c|c|c|c|c|}
\hline
  & $n=100$ & $n=1000$ \\ \hline
$\beta=0.5$ & $1.03$ $(0.18)$ & $0.89$ $(0.09)$ \\ \hline
$\beta=1$ & $1.08$ $(0.24)$ & $0.95$ $(0.05)$ \\ \hline
$\beta=2$ & $1.18$ $(0.54)$ & $0.99$ $(0.04)$ \\ \hline
$\beta=5$ & $1.45$ $(1.11)$ & $1.04$ $(0.09)$ \\ \hline
\end{tabular}
\caption{Mean and variance (in parenthesis) of the estimated parameter $\hat{\theta}_{n}$ for a Brownian motion driven Ornstein-Uhlenbeck process over $100$ repetitions.}
\label{table_1}
\end{table}

\begin{table}[h]
\centering
\begin{tabular}{|c|c|c|c|c|}
\hline
  & $n=100$ & $n=1000$ \\ \hline
$\beta=0.5$ & $1.08$ $(0.30)$ & $0.97$ $(0.06)$ \\ \hline
$\beta=1$ & $1.06$ $(0.21)$ & $0.96$ $(0.06)$ \\ \hline
$\beta=2$ & $1.21$ $(0.51)$ & $1.01$ $(0.05)$ \\ \hline
$\beta=5$ & $1.20$ $(0.58)$ & $1.04$ $(0.07)$ \\ \hline
\end{tabular}
\caption{Mean and variance (in parenthesis) of the estimated parameter $\hat{\theta}_{n}$ for a Gamma process driven Ornstein-Uhlenbeck process over $100$ repetitions.}
\label{table_2}
\end{table}

\section{Asymptotic normality of the integrated periodogram} \label{sec:asymptotic_normality_periodogram}

For a function $G \in L^{1}(\R) \cap L^{2}(\R)$, we let
\begin{equation*}
  J_{n} \coloneqq \int_{-\infty}^{\infty} G(u) I_{Z,n}(u) \, du \; \text{ and } \; J \coloneqq \int_{-\infty}^{\infty} G(u) \phi_{Z}(u, \boldsymbol{\theta}_{0}) \, du.
\end{equation*}
We show in Lemma \ref{lemma:clt} below that $J_{n}$ is asymptotically normal with mean $J$. We begin with several preliminary results. We denote by 
\begin{equation*}
  \hat{G}_{R}(\xi) \coloneqq \frac{1}{2\pi} \int_{-\infty}^{\infty} G(u) \cos(\xi u) \, du
\end{equation*}
the real part of the Fourier transform of $G(\cdot)$. We notice that $\hat{G}_{R}$ is in $L^{2}(\mathbb{R})$, uniformly continuous, and bounded in absolute value by $\norm{G}_{L^{1}}/(2 \pi)$, where $\norm{G}_{L^{1}}$ is the $L^{1}$-norm of $G$. Using \eqref{periodogram} one can show that
\begin{align*}
  J_{n} &= \frac{1}{n} \sum_{k=1}^{n} \sum_{j=1}^{n} \hat{G}_{R}(\tau_{k}-\tau_{j}) Y(\tau_{k}) Y(\tau_{j}) \\
  &= \frac{1}{n} \sum_{j=1}^{n} \sum_{k=-(n-j)}^{n-j} U_{j}(\abs{k}),
\end{align*}
where $U_{j}(k) \coloneqq \hat{G}_{R}(\tau_{j+k}-\tau_{j}) Y(\tau_{j}) Y(\tau_{j+k})$ for $j,k \in \N_{0}$. We also let
\begin{equation*}
  H_{j}(n) \coloneqq \sum_{k=-n}^{n} U_{j}(\abs{k}).
\end{equation*}  

\begin{lemma} \label{lemma:mixing}
  Assume \ref{H1}-\ref{H3}. For every $k,n \in \N_{0}$ the sequences $(U_{j}(k))_{j \in \N_{0}}$ and $(H_{j}(n))_{j \in \N_{0}}$ are strongly stationary and strongly mixing with exponentially decaying coefficients.
\end{lemma}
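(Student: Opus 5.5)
The plan is to reduce everything to the joint sequence $V_{j} \coloneqq (Y(\tau_{j}), \nu_{j+1})$, $j \in \Z$, where $\nu_{j+1} = \tau_{j+1}-\tau_{j}$. By Proposition 4.1 of \citet{Brandes-2023}, as recalled after Corollary \ref{corollary:asymptotic_normality}, $(Y(\tau_{k}), \tau_{k}-\tau_{k-1})_{k \in \Z}$ is strongly mixing with exponentially decaying coefficients. Shifting the index of the second component by one gives $V_{j}$, which has the same property, since it is a measurable function of two consecutive terms of that sequence.

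The key observation is that, for fixed $k$, $U_{j}(k)$ is a fixed measurable function of the block $(V_{j}, V_{j+1}, \dots, V_{j+k})$. Indeed, $\tau_{j+k}-\tau_{j} = \nu_{j+1}+\dots+\nu_{j+k}$ and $Y(\tau_{j+k})$ is the first component of $V_{j+k}$. Concretely,
\begin{equation*}
  U_{j}(k) = \Psi_{k}(V_{j}, \dots, V_{j+k}), \qquad \Psi_{k}(v_{0},\dots,v_{k}) = \hat{G}_{R}\Bigl(\sum_{i=0}^{k-1} x_{i}^{(2)}\Bigr) x_{0}^{(1)} x_{k}^{(1)},
\end{equation*}
where $v_{i} = (x_{i}^{(1)}, x_{i}^{(2)})$. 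The map $\Psi_{k}$ is Borel because $\hat{G}_{R}$ is continuous. Similarly, $H_{j}(n) = U_{j}(0) + 2\sum_{k=1}^{n} U_{j}(k)$ is a fixed measurable function of $(V_{j},\dots,V_{j+n})$.

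Two standard facts then finish the proof. The first is strong stationarity of $(V_{j})$. The $\nu$'s are i.i.d.\ and independent of $L$, and $Y$ is strictly stationary, so conditioning on $\nu$ shows that the finite-dimensional laws of $(Y(\tau_{j+i}), \nu_{j+1+i})_{i}$ do not depend on $j$. The point is that $(\tau_{j+i}-\tau_{j})_{i}$ has the same law as $(\tau_{i})_{i}$ and $Y(\cdot+\tau_{j})$ has the same law as $Y$, independently of $\nu$. For $j \geq 0$ this is already implicit in the cited mixing result, and we only need $j \in \N_{0}$. A function of a sliding block of fixed length of a stationary sequence is stationary.

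The second fact concerns mixing. If $W_{j} = \Psi(V_{j},\dots,V_{j+m})$ with $m$ fixed, then $\sigma(W_{i}: i \le j) \subseteq \sigma(V_{i}: i \le j+m)$ and $\sigma(W_{i}: i \ge j+h) \subseteq \sigma(V_{i}: i \ge j+h)$. Hence $\alpha_{W}(h) \le \alpha_{V}(h-m)$ for $h > m$, which still decays exponentially, with the constant inflated by a factor $\rho^{-m}$. Applying this with $m=k$ gives the claim for $U_{j}(k)$, and with $m=n$ the claim for $H_{j}(n)$.

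The only delicate step is the precise transfer from the cited Proposition 4.1 of \citet{Brandes-2023}. That result is stated for $(Y(\tau_{k}), \tau_{k}-\tau_{k-1})$ on $\Z$ with the special two-sided construction of $\tau$ around $\tau_{0}=0$. I would check that restricting to indices $j \ge 0$ and pairing $Y(\tau_{j})$ with the forward increment $\nu_{j+1}$ causes no issue. Because only nonnegative indices enter $U_{j}(k)$, the whole argument runs inside $\sigma(\nu_{1},\nu_{2},\dots, L)$, where the renewal structure is genuinely i.i.d. Apart from that, the argument is routine.
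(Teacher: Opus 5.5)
Your proposal is correct and follows essentially the same route as the paper. Both write $U_{j}(k)$ and $H_{j}(n)$ as measurable functions of a sliding block of fixed length of the pair sequence $(Y(\tau_{l}),\tau_{l}-\tau_{l-1})$, take its exponential $\alpha$-mixing from Proposition 4.1 of \citet{Brandes-2023}, and conclude $\alpha_{U}(h)\le\alpha_{Y,\tau}(h-k)$ for $h>k$. The differences are cosmetic: you check strong stationarity by a direct conditioning argument where the paper cites Proposition 2.1 of \citet{Brandes-2019}, and your shift to $(Y(\tau_{j}),\nu_{j+1})$ is unnecessary, since with the paper's pairing $\tau_{j+k}-\tau_{j}$ already depends only on the increments at indices $j+1,\dots,j+k$.
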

\begin{proof}
  Using Proposition 2.1 of \citet{Brandes-2019} we obtain that $(Y(\tau_{l}), \tau_{l}-\tau_{l-1})_{l \in \Z}$ is strongly stationary. Since $(U_{j}(k))_{j \in \N_{0}}$ is a measurable function of a strongly stationary sequence, it is also strongly stationary. Next, using Proposition 3.34 of \citet{Marquardt-2007} and Proposition 4.1 of \citet{Brandes-2023}, we see that $(Y(\tau_{l}), \tau_{l}-\tau_{l-1})_{l \in \Z}$ is strongly mixing with exponentially decaying coefficients $\alpha_{Y,\tau}(h)=O(e^{-c h})$, where $c>0$ and $h \in \N$. Since $U_{j}(k)$ is a measurable function of $(Y(\tau_{l}), \tau_{l}-\tau_{l-1})_{l \in \{ j, \dots, j+k \} }$, whereas $U_{j+h}(k)$ is a measurable function of $(Y(\tau_{l}), \tau_{l}-\tau_{l-1})_{l \in \{ j+h, \dots, j+h+k \} }$, we deduce that $(U_{j}(k))_{j \in \N_{0}}$ is strongly mixing with coefficients $(\alpha_{U}(h))_{h \in \N}$, where
\begin{equation*}
  \alpha_{U}(h) \leq \begin{cases}
    1 &\text{ if } 1 \leq h \leq k \\
    \alpha_{Y,\tau}(h-k) &\text{ if } h > k.
\end{cases}    
\end{equation*}
We conclude that $\alpha_{U}(h)=O(e^{-c h})$. The proof for $(H_{j}(n))_{j \in \N_{0}}$ is similar.
\end{proof}
Next, we establish an upper bound on the autocovariance function of $Y$. Clearly, both the matrix $A$ and $\gamma_{Y}(\cdot)$ depend on the parameter $\boldsymbol{\theta}_{0} \in \Theta$, that is, $A=A(\boldsymbol{\theta}_{0})$ and $\gamma_{Y}(\cdot) = \gamma_{Y}(\cdot, \boldsymbol{\theta}_{0})$. We provide a bound that holds uniformly for all parameters.
\begin{lemma} \label{lemma:autocovariance}
Assume \ref{H1}-\ref{H3}. Then, there exist $C,D >0$ such that $\abs{\gamma_{Y}(h, \boldsymbol{\theta})} \leq C e^{-D \abs{h}}$ for all $h \in \R$ and $\boldsymbol{\theta} \in \Theta$.
\end{lemma}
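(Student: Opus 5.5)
Since $\gamma_{Y}(h,\boldsymbol{\theta}) = \sigma_{L}^{2}\mathbf{b}^{\top} e^{A|h|}\Sigma\mathbf{b}$ by \eqref{autocovariance}, with $A=A(\boldsymbol{\theta})$, $\mathbf{b}=\mathbf{b}(\boldsymbol{\theta})$ and $\Sigma=\Sigma(\boldsymbol{\theta})$, we plan to bound the three factors uniformly on the compact set $\Theta$. The vector $\mathbf{b}$ depends continuously (linearly) on $\boldsymbol{\theta}$, so $\sup_{\Theta}\norm{\mathbf{b}(\boldsymbol{\theta})}<\infty$. The key quantity is therefore a uniform exponential bound $\norm{e^{Ah}}\le C_{1}e^{-Dh}$ for all $h\ge 0$ and $\boldsymbol{\theta}\in\Theta$. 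Once this is established, $\norm{\Sigma(\boldsymbol{\theta})}\le \int_{0}^{\infty}C_{1}^{2}e^{-2Ds}\,ds = C_{1}^{2}/(2D)$ is uniformly bounded, and the claim follows with $C=\sigma_{L}^{2}\sup_{\Theta}\norm{\mathbf{b}}^{2}C_{1}^{3}/(2D)$.

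For the uniform bound on the matrix exponential, we first use that the eigenvalues of $A(\boldsymbol{\theta})$ are the zeros of $a$. These depend continuously on the coefficients, and by \ref{H3} each has strictly negative real part. Hence the function $\boldsymbol{\theta}\mapsto \max_{\lambda}\operatorname{Re}\lambda(\boldsymbol{\theta})$ is continuous and negative on the compact set $\Theta$, and so attains a maximum $-2D<0$.

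The main obstacle is passing from this spectral bound to a norm bound with a constant uniform in $\boldsymbol{\theta}$, because the Jordan structure (multiplicities) can change with $\boldsymbol{\theta}$, so the constant cannot simply be read off a diagonalization. The plan is to avoid Jordan forms and use the Cauchy integral representation
\begin{equation*}
  e^{Ah} = \frac{1}{2\pi\iu}\oint_{\Gamma} e^{zh}(zI-A)^{-1}\,dz.
\end{equation*}
Take $\Gamma$ to be the boundary of the rectangle $\{-M\le \operatorname{Re} z\le -D,\ |\operatorname{Im} z|\le M\}$, where $M$ is large enough, uniformly in $\boldsymbol{\theta}$, that all eigenvalues lie inside; this is possible since $\norm{A(\boldsymbol{\theta})}$ is bounded on $\Theta$. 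The map $(z,\boldsymbol{\theta})\mapsto\norm{(zI-A(\boldsymbol{\theta}))^{-1}}$ is continuous on the compact set $\Gamma\times\Theta$, because no eigenvalue lies on $\Gamma$: the eigenvalues satisfy $\operatorname{Re}\lambda\le-2D<-D$ and lie strictly inside the box. It is therefore bounded by some $C_{2}$. Since $|e^{zh}|\le e^{-Dh}$ on $\Gamma$ for $h\ge0$ (as $\operatorname{Re}z\le -D$ there), we obtain $\norm{e^{Ah}}\le \operatorname{length}(\Gamma)\,C_{2}e^{-Dh}/(2\pi)$. This gives the required uniform bound and completes the proof.
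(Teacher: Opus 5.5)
Your proof is correct, and its skeleton matches the paper's (Appendix~\ref{sec:uniform_upper_bounds}). Both first obtain a uniform estimate $\norm{e^{At}} \le \tilde{C} e^{-Dt}$. Both then deduce $\norm{\Sigma} \le \tilde{C}^{2}/(2D)$ from $\norm{e^{As}\mathbf{e}_{p}\mathbf{e}_{p}^{\top}e^{A^{\top}s}} \le \norm{e^{As}}^{2}$, and finish using that $\mathbf{b}$ is bounded on the compact set $\Theta$.

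You differ only in the key step, the uniform matrix-exponential bound. The paper shows that the spectral abscissa is continuous and uniformly negative on $\Theta$. It then notes that $A$ is smooth in $\boldsymbol{\theta}$, so the hypotheses of Lemma~9.9 of Khalil (2002) hold, and takes the uniform constant from the proof of that lemma. You instead prove the bound directly with the Dunford--Cauchy formula over one fixed rectangle $\Gamma \subset \{\operatorname{Re} z \le -D\}$ that encloses every spectrum. The resolvent is then bounded by joint continuity on the compact set $\Gamma \times \Theta$.

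What your route buys:
\begin{itemize}
\item It handles the $\boldsymbol{\theta}$-dependent Jordan structure with no case analysis.
\item It needs only continuity of $\boldsymbol{\theta} \mapsto A(\boldsymbol{\theta})$, not differentiability.
\item It makes the lemma self-contained.
\item It extends at no cost to the $\boldsymbol{\theta}$-derivatives of $e^{Ah}$ needed in Lemma~\ref{lemma:g2_infinitely_differentiable}. Differentiating the resolvent under the integral gives products of resolvents and derivatives of $A$, which are again bounded on $\Gamma \times \Theta$. So all these derivatives decay at the same rate $e^{-Dh}$, whereas the paper handles them through the decay of $t^{k}e^{-Dt}$.
\end{itemize}

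The paper's route is shorter on the page, but it delegates the essential uniformity argument to an external reference whose proof the reader must consult.
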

\begin{proof}
  We denote by $\norm{\cdot}$ the Euclidean norm as well as the corresponding operator norm, that is, the spectral norm. Using \eqref{exponential_decrease_autocovariance} in Appendix \ref{sec:uniform_upper_bounds} we obtain that
\begin{equation*}
    \abs{\gamma_{Y}(h, \boldsymbol{\theta})} \leq \doubletilde{C} \sigma_{L}^{2} e^{-D \abs{h}}
\end{equation*}
for a positive constant $\doubletilde{C}>0$ and all $\boldsymbol{\theta} \in \Theta$. The result follows by taking $C=\doubletilde{C} \sigma_{L}^{2}$.
\end{proof}
We define the truncated reduced covariance measure $\bar{\mu}_{N,n}$ for all Borel sets $B$ by
\begin{equation*}
  \bar{\mu}_{N,n}(B) = \delta_{0}(B) + \int_{B} \sum_{k=1}^{n} f^{\ast k}(\abs{h}) \, dh
\end{equation*}
and the truncated spectral density $\bar{\phi}_{Z,n}(\cdot, \boldsymbol{\theta}_{0})$ for all $u \in \R$ by
\begin{equation} \label{truncated_spectral_density}
  \bar{\phi}_{Z,n}(u, \boldsymbol{\theta}_{0}) = \frac{1}{2 \pi} \int_{-\infty}^{\infty} e^{-\iu hu} \gamma_{Y}(h) \, d \bar{\mu}_{N,n}(h).
\end{equation}
Finally, let
\begin{equation*}
  \bar{J}_{n} \coloneqq \int_{-\infty}^{\infty} G(u) \bar{\phi}_{Z,n}(u, \boldsymbol{\theta}_{0}) \, du.
\end{equation*}
We show below that $(H_{j}(n))_{j \in \N_{0}}$ has expectation $\bar{J}_{n}$, which converges exponentially fast to $J$ (for the upcoming central limit theorem the rate $\sqrt{n}$ suffices).

\begin{lemma} \label{lemma:mean_variance}
  Assume \ref{H1}-\ref{H3}. Then, $\bE{H_{0}(n)} = \bar{J}_{n}$ and $\lim_{n \to \infty} \sqrt{n} (J - \bar{J}_{n}) = 0$.
\end{lemma}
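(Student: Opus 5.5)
We compute $\bE{H_{0}(n)}$ term by term and then compare $\bar J_n$ with $J$ using Lemma \ref{lemma:autocovariance}.

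\textbf{The mean.} By definition
\begin{equation*}
H_{0}(n)=U_{0}(0)+2\sum_{k=1}^{n}U_{0}(k), \qquad U_{0}(k)=\hat G_R(\tau_k)Y(0)Y(\tau_k).
\end{equation*}
We condition on the sampling sequence. Since $\nu$ is independent of $L$ by \ref{H2}, we have $\bE{Y(0)Y(\tau_k)\mid \tau}=\gamma_Y(\tau_k)$. Hence $\bE{U_0(k)}=\bE{\hat G_R(\tau_k)\gamma_Y(\tau_k)}$. For $k\ge1$, $\tau_k$ has density $f^{\ast k}$ supported on $[0,\infty)$, so
\begin{equation*}
\bE{U_0(k)}=\int_0^\infty \hat G_R(h)\gamma_Y(h) f^{\ast k}(h)\,dh=\tfrac12\int_{-\infty}^{\infty}\hat G_R(h)\gamma_Y(h)f^{\ast k}(\abs h)\,dh,
\end{equation*}
by evenness of $\hat G_R$ and $\gamma_Y$. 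For $k=0$ we get $\hat G_R(0)\gamma_Y(0)$. Summing gives
\begin{equation*}
\bE{H_0(n)}=\int_{-\infty}^{\infty}\hat G_R(h)\gamma_Y(h)\,d\bar\mu_{N,n}(h).
\end{equation*}
Integrability is clear: $\hat G_R$ is bounded, $\gamma_Y$ decays exponentially, and each $f^{\ast k}$ is a density.

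Next we verify that this equals $\bar J_n$. Insert \eqref{truncated_spectral_density} into $\bar J_n$ and apply Fubini. This is justified because $G\in L^1$ and $\int\abs{\gamma_Y}\,d\bar\mu_{N,n}<\infty$, the latter since $\sum_{k\le n}f^{\ast k}\le r$ is bounded and $\gamma_Y$ is integrable by Lemma \ref{lemma:autocovariance}. This gives
\begin{equation*}
\bar J_n=\int\gamma_Y(h)\frac{1}{2\pi}\int G(u)e^{-\iu hu}\,du\,d\bar\mu_{N,n}(h).
\end{equation*}
The measure $\bar\mu_{N,n}$ and $\gamma_Y$ are symmetric in $h$, so the sine part of the inner integral integrates to zero. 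What remains is exactly $\int \hat G_R\gamma_Y\,d\bar\mu_{N,n}$, which proves the first claim.

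\textbf{The rate.} The same Fubini argument with $r$ in place of $\sum_{k\le n}f^{\ast k}$ gives
\begin{equation*}
J-\bar J_n=\int_{-\infty}^\infty \hat G_R(h)\gamma_Y(h)\sum_{k>n}f^{\ast k}(\abs h)\,dh .
\end{equation*}
This uses that $r$ is bounded, so the integral defining $J$ converges absolutely. Using $\abs{\hat G_R}\le\norm G_{L^1}/(2\pi)$ and Lemma \ref{lemma:autocovariance},
\begin{equation*}
\abs{J-\bar J_n}\le \frac{C\norm G_{L^1}}{\pi}\sum_{k>n}\bE{e^{-D\tau_k}}=\frac{C\norm G_{L^1}}{\pi}\sum_{k>n}\rho^k,\qquad \rho\coloneqq\bE{e^{-D\nu_1}}.
\end{equation*}
The equality uses that the $\nu_j$ are i.i.d., so $\bE{e^{-D\tau_k}}=\rho^k$.

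Since $\nu_1\ge0$ is absolutely continuous, $\nu_1>0$ almost surely, and therefore $\rho<1$. The tail sum is then $O(\rho^n)$, and $\sqrt n\,\rho^n\to0$.

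\textbf{Main obstacle.} The delicate point is the careful justification of the Fubini interchanges and of the representation of $J$ through $\hat G_R$. For $J$ itself, the interchange needs $\int\abs{\gamma_Y}r<\infty$, which holds because $r$ is bounded (renewal density theorem) and $\gamma_Y$ decays exponentially. The remaining steps are routine.
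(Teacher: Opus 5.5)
Your proof is correct and follows essentially the same route as the paper. You condition on $\tau$ to identify $\bE{H_0(n)}$ with the integral of $\hat{G}_R\gamma_Y$ against the truncated reduced covariance measure $\bar{\mu}_{N,n}$, and then bound $J-\bar{J}_n$ via Lemma \ref{lemma:autocovariance} by the geometric tail $\sum_{k>n}(\mathcal{L}(D))^k$, where your $\rho=\bE{e^{-D\nu_1}}$ is exactly the paper's $\mathcal{L}(D)$. Your explicit use of the evenness of $\hat{G}_R$, $\gamma_Y$ and $\bar{\mu}_{N,n}$ to remove the sine part is, if anything, cleaner than the paper's write-up, which writes $e^{-\iu u\tau_{\abs{k}}}$ where only the cosine (real) part actually enters $U_0(k)$.
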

\begin{proof}
By conditioning on $\tau$ and taking expectation with respect to $Y$ we obtain that
\begin{align*}
  \bE{H_{0}(n)} &= \frac{1}{2\pi} \mathbf{E} \biggl[ \sum_{k=-n}^{n} \biggl[ \int_{-\infty}^{\infty} G(u) e^{-\iu u \tau_{\abs{k}}} \, du  \biggr] \gamma_{Y}(\tau_{\abs{k}}) \biggr] \\
  &= \frac{1}{2\pi} \int_{-\infty}^{\infty} G(u) \mathbf{E} \biggl[ \sum_{k=-n}^{n} e^{-\iu u \tau_{\abs{k}}} \gamma_{Y}(\tau_{\abs{k}}) \biggr] \, du.
\end{align*}
Next, we compute 
\begin{align*}
  \mathbf{E} \biggl[ \sum_{k=-n}^{n} e^{-\iu u \tau_{\abs{k}}} \gamma_{Y}(\tau_{\abs{k}}) \biggr] &= \gamma_{Y}(0) + \int_{-\infty}^{\infty} e^{-\iu u h} \gamma_{Y}(h) \sum_{k=1}^{n} f^{\ast k}(\abs{h}) \, dh \\
  &= \int_{-\infty}^{\infty} e^{-\iu u h} \gamma_{Y}(h) \, d \bar{\mu}_{N,n}(h).
\end{align*}
We conclude that
\begin{equation*}
  \bE{H_{0}(n)} = \frac{1}{2\pi} \int_{-\infty}^{\infty} G(u) \biggl[ \int_{-\infty}^{\infty} e^{-\iu u h} \gamma_{Y}(h) \, d \bar{\mu}_{N,n}(h) \biggr] \, du = \bar{J}_{n}.
\end{equation*}
Next, let
\begin{equation} \label{Laplace_transform}
  \mathcal{L}(\xi) \coloneqq \int_{0}^{\infty} e^{-\xi h} f(h) \, dh
\end{equation}  
be the Laplace transform of $f$. Using that $G \in L^{1}(\R)$ and Lemma \ref{lemma:autocovariance}, we find positive constants $C$ and $D$ such that
\begin{align*}
  2 \pi \sqrt{n} \abs{J-\bar{J}_{n}} &\leq \sqrt{n} \int_{-\infty}^{\infty} \abs{G(u)} \biggl( \int_{-\infty}^{\infty} \abs{e^{-\iu u h}} \abs{\gamma_{Y}(h)} \sum_{k=n+1}^{\infty} f^{\ast k}(\abs{h}) \, dh \biggr) \, du \\
   &\leq \norm{G}_{L^{1}} \sqrt{n} \int_{-\infty}^{\infty} \abs{\gamma_{Y}(h)} \sum_{k=n+1}^{\infty} f^{\ast k}(\abs{h}) \, dh \\
   &\leq 2 C \norm{G}_{L^{1}} \sqrt{n} \sum_{k=n+1}^{\infty} \int_{0}^{\infty} e^{-D h} f^{\ast k}(h) \, dh \\
  &\leq 2 C \norm{G}_{L^{1}} \sqrt{n} \sum_{k=n+1}^{\infty} ( \mathcal{L}(D) )^{k}.
\end{align*}
Since $\mathcal{L}(D) < 1$ as $D>0$, we conclude that
\begin{equation*}
  \sum_{k=n+1}^{\infty} ( \mathcal{L}(D) )^{k} = \frac{( \mathcal{L}(D) )^{n+1}}{1-\mathcal{L}(D)}
\end{equation*}
converges exponentially fast to zero as $n \to \infty$.
\end{proof}
\noindent For all $j \in \mathbb{N}_{0}$ and $m \in \mathbb{N}$ we define
\begin{equation} \label{Sm}
  S_{j}(m) \coloneqq H_{j}(m) - \bar{J}_{m} = \bar{U}_{j}(0) + 2 \sum_{k=1}^{m} \bar{U}_{j}(k),
\end{equation}
where $\bar{U}_{j}(k) \coloneqq U_{j}(k) - \bE{U_{j}(k)}$ for all $j,k \in \mathbb{N}_{0}$. We compute mean and covariance of $(U_{j}(k))_{j \in \N_{0}}$ in Appendix \ref{sec:explicit_formulas}. The next lemma ensures that variances and covariances exist and are finite under assumption \ref{H1}.

\begin{lemma} \label{lemma:moments_of_U}
  Suppose that $L$ has finite moments of order $2 r$, where $r \geq 1$. Then, the moments $\bE{\abs{U_{j}(k)}^{r}}$, $\bE{\abs{\bar{U}_{j}(k)}^{r}}$, and $\bE{\abs{S_{j}(m)}^{r}}$ are finite for all $j,k \in \mathbb{N}_{0}$ and $m \in \mathbb{N}$.
\end{lemma}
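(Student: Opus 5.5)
We bound $U_{j}(k)$ pointwise, apply Cauchy--Schwarz to the product of the two $Y$-values, and then use stationarity of $Y$ together with the independence of $\tau$ and $L$. Since $\hat{G}_{R}$ is bounded by $\norm{G}_{L^{1}}/(2\pi)$, we have
\begin{equation*}
  \abs{U_{j}(k)}^{r} \leq \Bigl(\frac{\norm{G}_{L^{1}}}{2\pi}\Bigr)^{r} \abs{Y(\tau_{j})}^{r} \abs{Y(\tau_{j+k})}^{r}.
\end{equation*}

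The first step handles the product. By the Cauchy--Schwarz inequality,
\begin{equation*}
  \bE{\abs{Y(\tau_{j})}^{r}\abs{Y(\tau_{j+k})}^{r}} \leq \bE{\abs{Y(\tau_{j})}^{2r}}^{1/2} \, \bE{\abs{Y(\tau_{j+k})}^{2r}}^{1/2}.
\end{equation*}

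The second step shows that each factor equals $\bE{\abs{Y(0)}^{2r}}$. We condition on $\tau$, which is independent of $L$ and hence of $Y$ by \ref{H2}. Because $Y$ is strongly stationary, $\bE{\abs{Y(\tau_{l})}^{2r} \mid \tau} = \bE{\abs{Y(0)}^{2r}}$ for every $l$. Taking expectations removes the conditioning, and the claim follows.

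The third step shows that $\bE{\abs{Y(0)}^{2r}}$ is finite. This is the statement recalled after \eqref{CARMA}: if $L$ has finite moments of order $2r$, then so does $Y$ \citep{Brockwell-2013}. This gives $\bE{\abs{U_{j}(k)}^{r}}<\infty$.

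The remaining moments follow from Minkowski's inequality in $L^{r}$, which applies because $r\geq 1$. For the centered variables,
\begin{equation*}
  \bE{\abs{\bar{U}_{j}(k)}^{r}}^{1/r} \leq \bE{\abs{U_{j}(k)}^{r}}^{1/r} + \abs{\bE{U_{j}(k)}} \leq 2\,\bE{\abs{U_{j}(k)}^{r}}^{1/r}.
\end{equation*}
For the partial sums, $S_{j}(m)$ is the finite sum $\bar{U}_{j}(0) + 2\sum_{k=1}^{m}\bar{U}_{j}(k)$ from \eqref{Sm}. Minkowski's inequality therefore bounds $\bE{\abs{S_{j}(m)}^{r}}^{1/r}$ by a finite sum of finite terms.

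The only step needing care is the conditioning argument for $Y(\tau_{l})$. There one must use the independence of $\tau$ and $Y$ rigorously, for example via Fubini on the product space, or directly via the strong stationarity of $(Y(\tau_{l}))_{l}$ from Proposition 2.1 of \citet{Brandes-2019}, which is already used in Lemma \ref{lemma:mixing}. No bound uniform in $k$ or $m$ is claimed, so no mixing arguments are needed.
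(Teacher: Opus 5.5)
Your proof is correct and follows essentially the same route as the paper. You bound $\hat{G}_{R}$ by $\norm{G}_{L^{1}}/(2\pi)$, use Cauchy--Schwarz with conditioning on $\tau$, independence and stationarity to reduce to $\bE{\abs{Y(0)}^{2r}}<\infty$, and then pass to $\bar{U}_{j}(k)$ and $S_{j}(m)$. The only differences are cosmetic: the paper applies Cauchy--Schwarz conditionally on $\tau$ rather than unconditionally, and bounds $S_{j}(m)$ via convexity of $\abs{\cdot}^{r}$ rather than Minkowski's inequality, which gives the same finiteness conclusion.
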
  
\begin{proof}
Since $\hat{G}_{R}$ is bounded in absolute value by $\norm{G}_{L^{1}}/(2\pi)$ we have
\begin{align*}
  \bE{\abs{U_{j}(k)}^{r}} &= \bE{ \abs{\hat{G}_{R}(\tau_{j+k}-\tau_{j})}^{r}\abs{Y(\tau_{k}) Y(\tau_{j+k})}^{r} } \\
  &\leq \biggl( \frac{\norm{G}_{L^{1}}}{2\pi} \biggr)^{r} \bE{ \abs{Y(\tau_{k}) Y(\tau_{j+k})}^{r} }.
\end{align*}
By conditioning on $\tau$ and using Cauchy–Schwarz inequality, we obtain that the expectation on the right-hand side (RHS) is bounded by
\begin{equation*}
 \bE{ \, (\bE{\abs{Y(\tau_{k})}^{2r} \, | \, \tau \,})^{1/2} (\bE{\abs{Y(\tau_{j+k})}^{2r} \, | \, \tau \,})^{1/2} }.
\end{equation*}
By stationarity the conditional moments above are both equal to $\bE{\abs{Y(0)}^{2r}}$. We conclude that
\begin{equation*}
  \bE{\abs{U_{j}(k)}^{r}} \leq \biggl( \frac{\norm{G}_{L^{1}}}{2\pi} \biggr)^{r} \, \bE{\abs{Y(0)}^{2r}},
\end{equation*}
where the RHS is finite since $G \in L^{1}(\mathbb{R})$ and $L$ has finite moments of order $2r$ (see \citet{Brockwell-2013}). It is easy to see that $\bE{\abs{U_{j}(k)}^{r}} < \infty$ if and only if $\bE{\abs{\bar{U}_{j}(k)}^{r}} < \infty$. Since $\abs{\, \cdot \,}^{r}$ is a convex function, we have
\begin{equation*}
  \biggl| \frac{1}{m+1} \sum_{j=0}^{m} a_{j} \biggr|^{r} \leq \frac{1}{m+1} \sum_{j=0}^{m} \abs{a_{j}}^{r}
\end{equation*}
for all $a_{0}, \dots, a_{m} \in \mathbb{R}$. Using this inequality and \eqref{Sm}, we obtain that
\begin{equation*}
  \bE{\abs{S_{j}(m)}^{r}} \leq (m+1)^{r-1} \biggl( \bE{\abs{\bar{U}_{j}(0)}^{r}} + 2^{r} \sum_{k=1}^{m} \bE{\abs{\bar{U}_{j}(k)}^{r}} \biggr).
\end{equation*}
Since $\bE{\abs{\bar{U}_{j}(k)}^{r}}<\infty$, the above RHS is finite.
\end{proof}
Our next result gives asymptotic normality of $J_{n}$ and $J_{N(T)}$. The mean and covariance of $(U_{j}(k))_{j \in \N_{0}}$, computed in Appendix \ref{sec:moments_and_variance}, can be used to derive explicit formulas for the variances $\sigma_{J,m}^{2}$ and $\sigma_{J}^{2}$ defined in \eqref{variance_of_Sm} and \eqref{limiting_variance} below.

\begin{lemma} \label{lemma:clt}
  Assume \ref{H1}-\ref{H3}. Then
\begin{align*}
  \text{(i) } &\sqrt{n} (J_{n} - J) \xrightarrow[n \to \infty]{d} \mathcal{N}(0, \sigma_{J}^{2}) \text{ and} \\
  \text{(ii) } &\sqrt{N(T)} (J_{N(T)} - J) \xrightarrow[T \to \infty]{d} \mathcal{N}(0, \sigma_{J}^{2}).
\end{align*}
\end{lemma}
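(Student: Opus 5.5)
We plan to prove (i) by an $m$-truncation argument and then deduce (ii) from (i) by an Anscombe-type argument. Fix $m\in\N$. For $n>m$ we write
\begin{equation*}
  \sqrt{n}(J_{n}-J) = \frac{1}{\sqrt{n}} \sum_{j=1}^{n} S_{j}(m) + \sqrt{n}(\bar{J}_{m}-J) + R_{n,m},
\end{equation*}
where $R_{n,m}$ collects three contributions. The first is the lags $k>m$, i.e.\ $n^{-1/2}\sum_{j}\sum_{m<\abs{k}\le n-j} U_{j}(\abs{k})$. The second is the boundary correction coming from the fact that for $j>n-m$ only lags up to $n-j$ appear. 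The third is the centering discrepancy, which is handled by the identity $\bE{H_{0}(n)}=\bar{J}_{n}$ of Lemma \ref{lemma:mean_variance}.

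More cleanly, we put $\sqrt{n}(\bar{J}_n - J)\to0$ (Lemma \ref{lemma:mean_variance}) into the deterministic part and work with the centered variables $\bar U_j(k)$. With this convention
\begin{equation*}
  \sqrt{n}(J_n-\bar{J}_n) = \frac{1}{\sqrt n}\sum_{j=1}^n\Bigl(\bar U_j(0)+2\sum_{k=1}^{n-j}\bar U_j(k)\Bigr) + \frac{2}{\sqrt n}\sum_{j=1}^{n}\sum_{k>n-j}\bE{U_0(k)},
\end{equation*}
and the last term is $O(n^{-1/2})$ because $\abs{\bE{U_0(k)}}\le C\norm{G}_{L^1}\mathcal{L}(D)^k$ by Lemma \ref{lemma:autocovariance} and conditioning on $\tau$.

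For fixed $m$, the sequence $(S_j(m))_j$ is stationary and strongly mixing with exponentially decaying coefficients (Lemma \ref{lemma:mixing}). By Lemma \ref{lemma:moments_of_U} with $2r=4+\delta$ it has finite moments of order $2+\delta/2$. Ibragimov's CLT for strongly mixing sequences then yields $n^{-1/2}\sum_{j=1}^n S_j(m)\to\mathcal N(0,\sigma_{J,m}^2)$ with $\sigma_{J,m}^2=\sum_{h\in\Z}\mathrm{Cov}(S_0(m),S_h(m))$. The boundary terms involve at most $m$ indices $j$ and are $o_p(1)$.

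The standard approximation theorem (Billingsley, Thm.\ 3.2) then reduces (i) to two facts: $\sigma_{J,m}^2\to\sigma_J^2$, and
\begin{equation*}
  \lim_{m\to\infty}\limsup_{n\to\infty}\mathbf{E}\Bigl[\Bigl(\frac{1}{\sqrt n}\sum_{j=1}^n\sum_{k=m+1}^{n-j}\bar U_j(k)\Bigr)^2\Bigr]=0.
\end{equation*}
The main obstacle is this uniform variance bound on the tail lags. We must control $\sum_{k,k'>m}\sum_{h}\abs{\mathrm{Cov}(\bar U_0(k),\bar U_h(k'))}$ and show it is summable and tends to zero as $m\to\infty$. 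We would attack it by conditioning on $\tau$ and expanding the fourth moments of $Y$ through the cumulant formula of Appendix \ref{sec:explicit_formulas}. Each covariance is then a sum of products of two autocovariances $\gamma_Y$ evaluated at differences of the $\tau$'s, plus a fourth-cumulant term involving $\int \mathbf{b}^\top e^{A(\cdot)}\mathbf{e}_p\cdots$, all bounded by exponentials $Ce^{-D\abs{\tau_a-\tau_b}}$ with $\hat G_R$ bounded. After taking expectations over the renewal times, each factor $e^{-D(\tau_{a}-\tau_b)}$ contributes $\mathcal{L}(D)^{\abs{a-b}}$. A product of two such factors pairing the index sets $\{0,k\}$ and $\{h,h+k'\}$ decays geometrically in enough indices to give a summable bound of the form $\sum_{k,k'>m}\sum_h \rho^{\,\max(k,k',\ldots)}$, i.e.\ $O(\rho^{m})$ for some $\rho<1$. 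The same estimates show that $\sigma_{J,m}^2$ is Cauchy, and we define $\sigma_J^2$ as its limit. If the cumulant bookkeeping becomes unwieldy, we would instead fall back on Davydov's covariance inequality with the mixing rate, but that alone does not give smallness in $m$. Some use of the decay of $\gamma_Y$ through $\hat G_R(\tau_{j+k}-\tau_j)Y(\tau_j)Y(\tau_{j+k})$ is therefore unavoidable.

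For (ii), note that $N(T)\to\infty$ a.s.\ and $N(T)/T\to\beta$ a.s.\ by the strong law for renewal processes. Hence $N(T)/\lfloor\beta T\rfloor\to1$ in probability. Anscombe's theorem then transfers (i) to the random index, once we verify uniform continuity in probability of $\sqrt n(J_n-J)$ in $n$. We obtain this from the same decomposition: the main part is a partial sum of the mixing sequence $S_j(m)$, for which Anscombe's condition follows from Kolmogorov-type maximal inequalities for mixing sequences, and the remainder is controlled in $L^2$ uniformly over $n$ in windows $[\,n(1-\epsilon),n(1+\epsilon)]$. Independence of $\tau$ from $L$ is not needed here beyond what is already used in (i).
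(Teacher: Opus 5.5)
You follow the same architecture as the paper: truncation of the lags at $m$, Ibragimov's CLT for the exponentially mixing sequence $(S_j(m))_j$ with $2+\delta/2$ moments, the Billingsley/Brockwell--Davis approximation theorem, and a R\'enyi--Anscombe transfer with a maximal inequality for mixing sums in (ii). The gap is in the step you yourself single out as the main obstacle. The mechanism you propose for the tail-lag bound does not work, because after conditioning on $\tau$ not every pairing in $\mathbf{E}[Y(0)Y(\tau_k)Y(\tau_h)Y(\tau_{h+k'})\mid\tau]$ gives a factor that decays in the lag indices.

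For $h=0$, $\mathbf{Var}[U_0(k)]$ contains $\gamma_Y^2(0)\,\mathbf{E}[\hat{G}_R^2(\tau_k)]$. For $k<k'$, $\mathrm{Cov}(U_0(k),U_0(k'))$ contains $\gamma_Y(0)\,\mathbf{E}[\hat{G}_R(\tau_k)\hat{G}_R(\tau_{k'})\gamma_Y(\tau_{k'}-\tau_k)]$, which decays only in $\abs{k-k'}$. The same happens when the index intervals $[0,k]$ and $[h,h+k']$ overlap or are nested (the paper's cases $\mathcal{C}_{14}$--$\mathcal{C}_{16}$). There the pair $\{Y(0),Y(\tau_h)\}$ decays in $h$, but the remaining pair decays only in the distance between $k$ and $h+k'$, so one lag index is left free.

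If you bound $\hat{G}_R$ by a constant, as you propose, the $h=0$ terms alone contribute on the order of $\frac{1}{n}\sum_{j=1}^{n}\sum_{m<k,k'\le n-j}(\mathcal{L}(D))^{\abs{k-k'}}\asymp n$. So you do not even get boundedness in $n$, let alone $O(\rho^m)$. Your claim that $\sigma_{J,m}^2$ is Cauchy fails for the same reason.

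The missing idea is that along these diagonals the decay must come from $\hat{G}_R$, not from $\gamma_Y$. You need two facts:
\begin{itemize}
\item $\hat{G}_R\in L^2(\R)$, which follows from $G\in L^2(\R)$;
\item the renewal density $r=\sum_{k\ge1}f^{\ast k}$ is bounded, which follows from the boundedness of $f$ in \ref{H2}. Your proposal never uses this assumption.
\end{itemize}
For example, $\sum_{k\ge1}\mathbf{E}[\hat{G}_R^2(\tau_k)]=\int_0^\infty\hat{G}_R^2(s)r(s)\,ds<\infty$. The $\gamma_Y(0)$-terms above sum to at most $\gamma_Y(0)\int_0^\infty\int_0^\infty\abs{\hat{G}_R(s)\hat{G}_R(s+t)\gamma_Y(t)}r(s)r(t)\,ds\,dt$, which is finite by Cauchy--Schwarz in $s$.

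This is how the paper proceeds in Appendix \ref{sec:upper_bounds}, via \eqref{C5}--\eqref{C7} and \eqref{C14}--\eqref{C16}. The geometric bookkeeping you describe, i.e.\ \eqref{exponential_decay_2}, suffices only for the remaining cases. Both limits in $m$ (the tail-lag bound and $\sigma_{J,m}^2\to\sigma_J^2$) then follow from tails of absolutely convergent series. In general the rate is not geometric: the tail contains $\gamma_Y^2(0)\int_0^\infty\hat{G}_R^2(s)\sum_{k>m}f^{\ast k}(s)\,ds$. For $G=\I_{[-1,1]}$, where $\hat{G}_R(\xi)=\sin(\xi)/(\pi\xi)$, this decays like $1/m$ rather than like $\rho^m$.
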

\begin{proof}
  We begin by proving (i). By Lemma \ref{lemma:mean_variance} it is enough to show that $\mathcal{S}_{n} \xrightarrow[n \to \infty]{d} \mathcal{N}(0, \sigma_{J}^{2})$, where
\begin{equation*}
  \mathcal{S}_{n} \coloneqq \frac{1}{\sqrt{n}} \sum_{j=1}^{n} S_{j}(n-j).
\end{equation*}
For $m \in \N$ we consider the sequence $S(m)=(S_{j}(m))_{j \in \N_{0}}$. By Lemma \ref{lemma:mixing} it is strongly stationary and strongly mixing with exponentially decaying coefficients $(\alpha_{S(m)}(h))_{h \in \N}$. We have that
\begin{align*}
  \mathbf{Var} \biggl[ \sum_{j=1}^{n} S_{j}(m) \biggr] &= \sum_{i=1}^{n} \bE{S_{i}^{2}(m)} +  2 \sum_{i=1}^{n} \sum_{j=1}^{n-i} \bE{S_{i}(m) S_{i+j}(m)} \\
  &= n \bE{S_{0}^{2}(m)} + 2 \sum_{i=1}^{n} \sum_{j=1}^{n-i} \bE{S_{0}(m) S_{j}(m)}
\end{align*}
yielding that
\begin{equation*}
  \lim_{n \to \infty} \mathbf{Var} \biggl[ \frac{1}{\sqrt{n}} \sum_{j=1}^{n} S_{j}(m) \biggr] = \sigma_{J,m}^{2},
\end{equation*}
where
\begin{equation} \label{variance_of_Sm}
  \sigma_{J,m}^{2} \coloneqq \bE{S_{0}^{2}(m)} + 2 \sum_{j=1}^{\infty} \bE{S_{0}(m) S_{j}(m)}.
\end{equation}
The strong mixing property of $S(m)$ ensures that $\sigma_{J,m}^{2}$ is well-defined and finite (see below for more details). Let $\mathcal{\tilde{S}}_{n,m} \coloneqq \frac{1}{\sqrt{n}} \sum_{j=1}^{n} S_{j}(m)$. Then, by Proposition 6.3.9 of \citet{Brockwell-1991} it suffices to show that
\begin{align} \label{CLT_for_Sm}
  &\mathcal{\tilde{S}}_{n,m} \xrightarrow[n \to \infty]{d} \mathcal{N}(0, \sigma_{J,m}^{2}), \\ \label{limiting_variance}
  &\sigma_{J}^{2} \coloneqq \lim_{m \to \infty} \sigma_{J,m}^{2} \text{ exists and is finite, and } \\ \label{continuity_m}
  &\lim_{m \to \infty} \limsup_{n \to \infty} \bP{\abs{\mathcal{S}_{n} - \mathcal{\tilde{S}}_{n,m}} \geq \epsilon } = 0 \text{ for all } \epsilon>0.
\end{align}
We first notice that $\sigma_{J,m}^{2} < \infty$. To see this, we use an inequality due to \citet{Davydov-1968}: for any random variables $X_{1}$ and $X_{2}$ with mean zero and positive real numbers $p,q,r$ with $1/p+1/q+1/r=1$
\begin{equation*}
  \abs{\bE{X_{1}X_{2}}} \leq 12 (\bE{\abs{X_{1}}^{p}})^{1/p} (\bE{\abs{X_{2}}^{q}})^{1/q} (\alpha_{X})^{1/r},
\end{equation*}
where $\alpha_{X}$ is the strong mixing coefficient of the $\sigma$-algebras generated by $X_{1}$ and $X_{2}$. Using this with $p=q=2+\delta/2$ and $r=(4+\delta)/\delta$ we obtain
\begin{equation} \label{finite_variance}
  \bE{S_{0}(m) S_{j}(m)} \leq 12 (\bE{\abs{S_{0}(m)}^{2+\delta/2}})^{4/(4+\delta)} \alpha_{S(m)}(j)^{\delta/(4+\delta)}.
\end{equation}
Using Lemma \ref{lemma:moments_of_U} with $r=2+\delta/2$ and \ref{H1}, we have
\begin{equation} \label{moment_Sm}
  \bE{\abs{S_{0}(m)}^{2+\delta/2}} < \infty.
\end{equation}
Since the $\alpha$-mixing coefficients of $S(m)$ are decreasing exponentially fast, we also obtain
\begin{equation} \label{mixing_coefficients_Sm}
  \sum_{j=1}^{\infty} \alpha_{S(m)}^{\delta/(4+\delta)}(j) < \infty.
\end{equation}
\eqref{finite_variance}, \eqref{moment_Sm} and \eqref{mixing_coefficients_Sm} imply that the RHS of \eqref{variance_of_Sm} is finite. Using \eqref{moment_Sm} and \eqref{mixing_coefficients_Sm}, we apply Theorem 18.5.3 of \citet{Ibragimov-1971} and obtain \eqref{CLT_for_Sm}. We now show \eqref{limiting_variance}. Using \eqref{Sm} we see that
\begin{align*}
  \bE{S_{0}(m) S_{j}(m)} &= \bE{\bar{U}_{0}(0) \bar{U}_{j}(0)} + 2 \sum_{k=1}^{m} \bE{\bar{U}_{0}(0) \bar{U}_{j}(k)} \\
  &+ 2 \sum_{l=1}^{m} \bE{\bar{U}_{0}(l) \bar{U}_{j}(0)} + 4 \sum_{k=1}^{m} \sum_{l=1}^{m} \bE{\bar{U}_{0}(l) \bar{U}_{j}(k)}
\end{align*}  
for all $j \in \mathbb{N}_{0}$. Using this in \eqref{variance_of_Sm}, we obtain that
\begin{equation} \label{sigma_m}
\begin{split}  
  \sigma_{J,m}^{2} &= \bE{\bar{U}_{0}^{2}(0)} + 4 \sum_{l=0}^{m} \sum_{k=1}^{m}\bE{\bar{U}_{0}(l) \bar{U}_{0}(k)} + 2 \sum_{j=1}^{\infty} \biggl( \bE{ \bar{U}_{0}(0) \bar{U}_{j}(0) } \\
  &+ 2 \sum_{k=1}^{m} \bE{\bar{U}_{0}(0) \bar{U}_{j}(k) } + 2 \sum_{l=1}^{m} \bE{ \bar{U}_{0}(l) \bar{U}_{j}(0) } + 4 \sum_{k=1}^{m} \sum_{l=1}^{m} \bE{ \bar{U}_{0}(l) \bar{U}_{j}(k) } \biggr).
\end{split}  
\end{equation}
Thus, we expect that $\sigma_{J}^{2} = \lim_{m \to \infty} \sigma_{J,m}^{2} = \sigma_{J,\infty}^{2}$ is given by the above expression with $m$ replaced by $\infty$. We make this precise in Appendix \ref{sec:finiteness_of_sigma2} by showing that the above series (with $m=\infty$) are absolutely convergent.

We now turn to \eqref{continuity_m}. We have
\begin{equation*}
  \mathcal{S}_{n} - \mathcal{\tilde{S}}_{n,m} = \frac{1}{\sqrt{n}} \sum_{j=1}^{n-m} (S_{j}(n-j) - S_{j}(m)) + \frac{1}{\sqrt{n}} \sum_{j=n-m+1}^{n} (S_{j}(n-j) - S_{j}(m))
\end{equation*} 
and by Chebyshev's inequality it is enough to show that both
\begin{align} \label{continuity_m1}
  \lim_{m \to \infty} \limsup_{n \to \infty} \frac{1}{n} \mathbf{E} \biggl[ \biggl( \sum_{j=1}^{n-m} (S_{j}(n-j) - S_{j}(m)) \biggr)^{2} \biggr] = 0 \text{ and } \\  \label{continuity_m2}
  \lim_{m \to \infty} \limsup_{n \to \infty} \frac{1}{n} \mathbf{E} \biggl[ \biggl( \sum_{j=n-m+1}^{n} (S_{j}(m) - S_{j}(n-j)) \biggr)^{2} \biggr] = 0.
\end{align}
By stationarity (cf.\ Lemma \ref{lemma:mixing}) the expectation in \eqref{continuity_m2} reduces to
\begin{equation*}
  \mathbf{E} \biggl[ \biggl( \sum_{j=1}^{m} (S_{j}(m) - S_{j}(m-j)) \biggr)^{2} \biggr].
\end{equation*}
Since this is finite and does not depend on $n$, we obtain the desired convergence. Next, recall that $\bar{U}_{j}(k) \coloneqq U_{j}(k) - \bE{U_{j}(k)}$ for all $j,k \in \mathbb{N}_{0}$. The expectation in \eqref{continuity_m1} equals
\begin{equation*}
  4 \sum_{i=1}^{n-m} \sum_{j=1}^{n-m} \sum_{l=m+1}^{n-i} \sum_{k=m+1}^{n-j} \bE{\bar{U}_{i}(l)\bar{U}_{j}(k)}.
\end{equation*}
Using stationarity, we see that this is equal to
\begin{equation*}
  4 \sum_{j=1}^{n-m} \sum_{l=m+1}^{n-j} \sum_{k=m+1}^{n-j} \bE{\bar{U}_{0}(l)\bar{U}_{0}(k)} + 8 \sum_{i=1}^{n-m} \sum_{j=1}^{n-m-i} \sum_{l=m+1}^{n-i} \sum_{k=m+1}^{n-j} \bE{\bar{U}_{0}(l)\bar{U}_{j}(k)}.
\end{equation*}
Dividing by $n$ and taking the limit for $n \to \infty$ we obtain
\begin{equation*}
  4 \sum_{l=m+1}^{\infty} \sum_{k=m+1}^{\infty} \bE{\bar{U}_{0}(l)\bar{U}_{0}(k)} + 8 \sum_{j=1}^{\infty} \sum_{l=m+1}^{\infty} \sum_{k=m+1}^{\infty} \bE{\bar{U}_{0}(l)\bar{U}_{j}(k)}.
\end{equation*}
The limit exists since the series
\begin{equation*}
\sum_{l=1}^{\infty} \sum_{k=1}^{\infty}\bE{\bar{U}_{0}(l) \bar{U}_{0}(k)} \text{ and }
\sum_{j=1}^{\infty} \sum_{l=1}^{\infty} \sum_{k=1}^{\infty} \bE{ \bar{U}_{0}(l) \bar{U}_{j}(k) }
\end{equation*}
are absolutely convergent (this is shown in Appendix \ref{sec:finiteness_of_sigma2}). Thus, by taking the limit for $m \to \infty$, we obtain that the above terms converge to zero yielding \eqref{continuity_m1}. This concludes the proof of (i). Turning to (ii), the law of large numbers for renewal processes (see e.g.\ Corollary 11 of \citet{Serfozo-2009}) yields that $N(T)/T \xrightarrow[T \to \infty]{a.s.} \beta$. As before, it is enough to show that $\mathcal{S}_{N(T)} \xrightarrow[n \to \infty]{d} \mathcal{N}(0, \sigma_{J}^{2})$. To this end, we use a version of Proposition 6.3.9 of \citet{Brockwell-1991} and show that
\begin{align} \label{CLT_for_Sm_bis}
  &\mathcal{\tilde{S}}_{N(T),m} \xrightarrow[T \to \infty]{d} \mathcal{N}(0, \sigma_{J,m}^{2}) \text{ and } \\ \label{continuity_m_bis}
  &\lim_{m \to \infty} \limsup_{T \to \infty} \bP{ \abs{\mathcal{S}_{N(T)} - \mathcal{\tilde{S}}_{N(T),m}} \geq \epsilon } = 0 \text{ for all } \epsilon>0.
\end{align}
Recall that we have already verified \eqref{limiting_variance} yielding that $\mathcal{N}(0, \sigma_{J,m}^{2}) \xrightarrow[m \to \infty]{d} \mathcal{N}(0, \sigma_{J}^{2})$. For all $0<\delta<\beta$ the probability in \eqref{continuity_m_bis} is bounded above by
\begin{equation*}
  \bP{ \abs{\mathcal{S}_{N(T)} - \mathcal{\tilde{S}}_{N(T),m}} \geq \epsilon, N(T) \geq (\beta-\delta)T } + \bP{N(T) < (\beta-\delta)T},
\end{equation*}
where in turn the first term is bounded by
\begin{equation*}
  \sup_{n \geq (\beta-\delta)T} \bP{ \abs{\mathcal{S}_{n} - \mathcal{\tilde{S}}_{n,m}} \geq \epsilon }.
\end{equation*}
Using \eqref{continuity_m} and $N(T)/T \xrightarrow[T \to \infty]{p} \beta$ we obtain \eqref{continuity_m_bis}.

We now turn to \eqref{CLT_for_Sm_bis}. We follow the argument used by \citet{Renyi-1957} to prove Anscombe's theorem \citep{Anscombe-1952, Gut-2012} for sums of i.i.d.\ random variables. In this case, we use a moment inequality for strongly mixing sequences due to \citet{Yokoyama-1980}. Let $0<\epsilon \leq 1/2$ and $A_{T,\epsilon} \coloneqq \{ \abs{N(T)-\beta T} \leq \beta \epsilon T \}$. Since $N(T)/T \xrightarrow[T \to \infty]{p} \beta$, there exists $T_{1}>0$ such that $\bP{A_{T,\epsilon}^{\complement}} \leq \epsilon \text{ for all } T \geq T_{1}$. It follows that for all $x \in \R$ and $T \geq T_{1}$
\begin{equation}
\begin{aligned} \label{clt_ii_1}
  \bP{\{ \mathcal{\tilde{S}}_{N(T),m} \leq x \} \cap A_{T,\epsilon}} &\leq \bP{\mathcal{\tilde{S}}_{N(T),m} \leq x} \\
    &\leq \bP{\{ \mathcal{\tilde{S}}_{N(T),m} \leq x \} \cap A_{T,\epsilon}} + \epsilon.
\end{aligned}  
\end{equation}
Let $n_{1} \coloneqq \lceil \beta T (1-\epsilon) \rceil$ and $n_{2} \coloneqq \lfloor \beta T (1+\epsilon) \rfloor$. Then, we have
\begin{equation} \label{clt_ii_2}
  \bP{\{ \mathcal{\tilde{S}}_{N(T),m} \leq x \} \cap A_{T,\epsilon}} = \sum_{n=n_{1}}^{n_{2}} \bP{\mathcal{\tilde{S}}_{n,m} \leq x, N(T)=n },
\end{equation}
where
\begin{equation*}
  \bP{\mathcal{\tilde{S}}_{n,m} \leq x, N(T)=n } = \bP{\sum_{j=1}^{n_{1}} S_{j}(m) \leq \sqrt{n} x - \sum_{j=n_{1}+1}^{n} S_{j}(m), N(T)=n }.
\end{equation*}
By letting $M \coloneqq \max_{j=n_{1},\dots,n_{2}} \abs{\sum_{j=n_{1}+1}^{n} S_{j}(m)}$ we obtain
\begin{equation}
\begin{aligned} \label{clt_ii_3}
  \bP{\sum_{j=1}^{n_{1}} S_{j}(m) \leq \sqrt{n_{1}} x - M, N(T)=n} &\leq \bP{\mathcal{\tilde{S}}_{m,m} \leq x, N(T)=n } \\
  &\leq \bP{\sum_{j=1}^{n_{1}} S_{j}(m) \leq \sqrt{n_{2}} x + M, N(T)=n }.
\end{aligned}  
\end{equation}
Next, using \ref{H1} and Lemma \ref{lemma:moments_of_U} we see that $\bE{\abs{S_{j}(m)}^{2+\delta/2}} < \infty$ for all $j \in \N$. Since by Lemma \ref{lemma:mixing} $(S_{j}(m))_{j \in \N_{0}}$ is strongly mixing with exponentially decaying
coefficients, condition (3.1) of Theorem 1 of \citet{Yokoyama-1980} is trivially satisfied for all $2<r<2+\delta/2$ (assume w.l.o.g.\ that $r \leq 3$). Let $B_{M,\epsilon} \coloneqq \{ M \leq \epsilon^{1/3} n_{1}^{1/2} \}$. Then, Corollary 1 of \citet{Yokoyama-1980} gives a constant $C_{r}$ such that $\bE{M^{r}} \leq C_{r} (n_{2}-n_{1})^{r/2}$ yielding that
\begin{equation*}
  \bP{B_{M,\epsilon}^{\complement}} \leq C_{r} \epsilon^{-r/3} \biggl( \frac{n_{2}-n_{1}}{n_{1}} \biggr)^{r/2} \leq  2^{r} C_{r} \epsilon^{r/6} \leq 8 C_{r} \epsilon^{1/3}.
\end{equation*}
Using the above inequality in \eqref{clt_ii_1}, \eqref{clt_ii_2}, and \eqref{clt_ii_3} we obtain that
\begin{align*}
  &\bP{\mathcal{\tilde{S}}_{N(T),m} \leq x} \geq \bP{ \{ \mathcal{\tilde{S}}_{n_{1},m} \leq x-\epsilon^{1/3} \} \cap A_{T,\epsilon} \cap B_{M,\epsilon}} \; \text{ and} \\
  &\bP{\mathcal{\tilde{S}}_{N(T),m} \leq x} \leq \bP{ \{ \mathcal{\tilde{S}}_{n_{2},m} \leq (n_{2}/n_{1})^{1/2} x+\epsilon^{1/3} \} \cap A_{T,\epsilon} \cap B_{M,\epsilon}} +8C_{r}\epsilon^{1/3}+\epsilon.
\end{align*}
Since $\bP{(A_{T,\epsilon} \cap B_{M,\epsilon})^{\complement}} \leq \epsilon + 8 C_{r} \epsilon^{1/3}$ we deduce that
\begin{align*}
  \bP{ \mathcal{\tilde{S}}_{n_{1},m} \leq x-\epsilon^{1/3} } - 8 C_{r} \epsilon^{1/3} - \epsilon &\leq \bP{\mathcal{\tilde{S}}_{N(T),m} \leq x} \\
  &\leq \bP{ \mathcal{\tilde{S}}_{n_{2},m} \leq (n_{2}/n_{1})^{1/2} x+\epsilon^{1/3} \}} +8C_{r}\epsilon^{1/3}+\epsilon.
\end{align*}
The result now follows from \eqref{CLT_for_Sm} and the fact that $\lim_{T \to \infty} \frac{n_{2}}{n_{1}} = \frac{1+\epsilon}{1-\epsilon}$ and $\epsilon>0$ is arbitrarily small.
\end{proof}

\section{Asymptotic normality of the estimator} \label{sec:asymptotic_normality_estimator}

In this section we establish consistency and asymptotic normality of the estimators $(\hat{\boldsymbol{\theta}}_{n})_{n \in \mathbb{N}}$ and $(\hat{\boldsymbol{\theta}}_{N(T)})_{T \geq 0}$. In particular, we prove Theorem \ref{theorem:asymptotic_normality} and Corollary \ref{corollary:asymptotic_normality}. We begin by noticing that the estimators are consistent. Using Lemma \ref{lemma:clt} (i) with
\begin{equation*}
  G(u) = \frac{\log(g(u,\boldsymbol{\theta}))}{1+u^{2}},
\end{equation*}
we see that
\begin{equation} \label{consistency_of_K}
    \hat{K}_{n}(\boldsymbol{\theta}) \xrightarrow[n \to \infty]{p} K(\boldsymbol{\theta}).
\end{equation}    
Furthermore, using \eqref{periodogram} and \eqref{truncated_spectral_density} one can show that
\begin{equation*}
  \bE{I_{Z,n}(u)} = \frac{1}{n} \sum_{j=1}^{n} \bar{\phi}_{Z,n-j}(u, \boldsymbol{\theta}_{0}).
\end{equation*}
Since by Lemma \ref{lemma:autocovariance}
\begin{equation*}
  \int_{-\infty}^{\infty} \abs{\gamma_{Y}(h)} \biggl( r(\abs{h}) - \sum_{k=1}^{n-j} f^{\ast k}(\abs{h}) \biggr) \, dh \leq \frac{2C (\mathcal{L}(D))^{n-j+1}}{1-\mathcal{L}(D)},
\end{equation*}
we see that
\begin{equation} \label{expectation_periodogram}
  \bE{I_{Z,n}(u)} = \phi_{Z}(u, \boldsymbol{\theta}_{0}) + O(1/n).
\end{equation}  
Using \ref{H1}-\ref{H4}, \eqref{consistency_of_K}, \eqref{expectation_periodogram}, and \eqref{equicontinuity_log_g} in Appendix \ref{sec:regularity}, the same steps as in the proof of Theorem 2.2 of \citet{Lii-1992} (see also \citet{Ibragimov-1967}) yield that $\hat{\boldsymbol{\theta}}_{n} \xrightarrow[n \to \infty]{p} \boldsymbol{\theta}_{0}$. The proof that $\hat{\boldsymbol{\theta}}_{N(T)}  \xrightarrow[T \to \infty]{p} \boldsymbol{\theta}_{0}$ is similar using that by Lemma \ref{lemma:clt} (ii) with
\begin{equation*}
  G(u) = \frac{\log(g(u,\boldsymbol{\theta}))}{1+u^{2}} \text{ and } G(u) = \frac{1}{1+u^{2}},
\end{equation*}
it holds that
\begin{equation*}
    \hat{K}_{N(T)}(\boldsymbol{\theta}) \xrightarrow[T \to \infty]{p} K(\boldsymbol{\theta}) \text{ and }  \mathbf{E} \biggl[ \int_{-\infty}^{\infty} \frac{I_{Z,N(T)}(u)}{1+u^{2}} \, du \biggr] \xrightarrow[T \to \infty]{} \int_{-\infty}^{\infty} \frac{\phi_{Z}(u, \boldsymbol{\theta}_{0})}{1+u^{2}} \, du.
\end{equation*}
We now turn to the asymptotic normality. We show in Appendix \ref{sec:regularity} that the function $\boldsymbol{\theta} \mapsto \log(g(u,\boldsymbol{\theta})$ is infinitely often differentiable and its partial derivatives are uniformly bounded in absolute value. This allows differentiation under the integral sign in the following (see Appendix \ref{sec:regularity} for more details). In particular, the first order partial derivatives of $K(\cdot)$ and $\hat{K}_{n}(\cdot)$ are given by
\begin{align*}
  K^{(i)}(\boldsymbol{\theta}) &\coloneqq \frac{\partial K(\boldsymbol{\theta})}{\partial \theta_{i}} = \int_{-\infty}^{\infty} \frac{\partial \log(g(u,\boldsymbol{\theta}))}{\partial \theta_{i}} \frac{\phi_{Z}(u, \boldsymbol{\theta}_{0})}{1+u^{2}} \, du \, \text{ and} \\
  \hat{K}_{n}^{(i)}(\boldsymbol{\theta}) &\coloneqq \frac{\partial \hat{K}_{n}(\boldsymbol{\theta})}{\partial \theta_{i}} = \int_{-\infty}^{\infty} \frac{\partial \log(g(u,\boldsymbol{\theta}))}{\partial \theta_{i}} \frac{I_{Z,n}(u)}{1+u^{2}} \, du.
\end{align*}
Similarly the second order partial derivatives are
\begin{align*}
  K^{(i,j)}(\boldsymbol{\theta}) &\coloneqq \frac{\partial^{2} K(\boldsymbol{\theta})}{\partial \theta_{i} \partial \theta_{j}} = \int_{-\infty}^{\infty} \frac{\partial^{2} \log(g(u,\boldsymbol{\theta}))}{\partial \theta_{i} \partial \theta_{j}} \frac{\phi_{Z}(u, \boldsymbol{\theta}_{0})}{1+u^{2}} \, du \text{ and } \\
  \hat{K}_{n}^{(i,j)}(\boldsymbol{\theta}) &\coloneqq \frac{\partial^{2} \hat{K}_{n}(\boldsymbol{\theta})}{\partial \theta_{i} \partial \theta_{j}} = \int_{-\infty}^{\infty} \frac{\partial^{2} \log(g(u,\boldsymbol{\theta}))}{\partial \theta_{i} \partial \theta_{j}} \frac{I_{Z,n}(u)}{1+u^{2}} \, du.
\end{align*}
The matrix $W$ in Theorem \ref{theorem:asymptotic_normality} has elements $w_{ij}=K^{(i,j)}(\boldsymbol{\theta}_{0})$ for $i,j=1,\dots,p+q$. As in (3.23) of \citet{Lii-1992} one can show that
\begin{equation} \label{entries_W}
  w_{ij} = - \int_{-\infty}^{\infty} \frac{\partial \log(g(u,\boldsymbol{\theta}_{0}))}{\partial \theta_{i}} \frac{\partial \log(g(u,\boldsymbol{\theta}_{0}))}{\partial \theta_{j}} \frac{\phi_{Z}(u, \boldsymbol{\theta}_{0})}{1+u^{2}} \, du.
\end{equation}
The entries of the matrix $Q$ are given for $i,j=1,\dots,p+q$ by
\begin{equation} \label{entries_Q}
  q_{ij} = Q_{ij}( \varphi^{(i)}(\cdot,\boldsymbol{\theta}_{0}), \varphi^{(j)}(\cdot,\boldsymbol{\theta}_{0}) ),
\end{equation}
where $Q_{i,j}(\cdot \, , \, \cdot)$ is given by \eqref{entries_Q_G} and 
\begin{equation*}
  \varphi^{(i)}(u,\boldsymbol{\theta}) \coloneqq \frac{\partial}{\partial \theta_{i}} \frac{\log(g(u,\boldsymbol{\theta}))}{(1+u^{2})}.
\end{equation*}
We now prove (i) of Theorem \ref{theorem:asymptotic_normality}. We may proceed along the lines of Theorem 3.1 of \citet{Lii-1992}. By the mean value theorem there exist $\tilde{\boldsymbol{\theta}}_{n}^{(i)}$ along the line joining $\hat{\boldsymbol{\theta}}_{n}$ and $\boldsymbol{\theta}_{0}$ such that
\begin{equation*}
  \hat{K}_{n}^{(i)}(\hat{\boldsymbol{\theta}}_{n}) = \hat{K}_{n}^{(i)}(\boldsymbol{\theta}_{0}) + \sum_{j=1}^{p+q} \hat{K}_{n}^{(i,j)}(\tilde{\boldsymbol{\theta}}_{n}^{(i)}) (\hat{\theta}_{n,j} - \theta_{0,j})
\end{equation*}  
for all $i=1,\dots,p+q$. Since $\hat{\boldsymbol{\theta}}_{n}$ is a point of maximum for $\hat{K}_{n}$, we have that $\hat{K}_{n}^{(i)}(\hat{\boldsymbol{\theta}}_{n})=0$. Multiplying both sides by $\sqrt{n}$ we obtain that
\begin{equation} \label{partial_derivatives_mean_value_theorem}
   \sum_{j=1}^{p+q} \hat{K}_{n}^{(i,j)}(\tilde{\boldsymbol{\theta}}_{n}^{(i)}) \sqrt{n} (\hat{\theta}_{n,j} - \theta_{0,j}) = -\sqrt{n} \hat{K}_{n}^{(i)}(\boldsymbol{\theta}_{0})
\end{equation}
for all $i=1,\dots,p+q$. We show below that
\begin{equation} \label{convergence_second_order_derivatives}
  \hat{K}_{n}^{(i,j)}(\tilde{\boldsymbol{\theta}}_{n}^{(i)}) \xrightarrow[n \to \infty]{p} w_{i,j}
\end{equation}  
and 
\begin{equation} \label{CLT_first_derivative}
  \sqrt{n} (\hat{K}_{n}^{(1)}(\boldsymbol{\theta}_{0}), \dots, \hat{K}_{n}^{(p+q)}(\boldsymbol{\theta}_{0}))^{\top} \xrightarrow[n \to \infty]{d} \mathcal{N}(\boldsymbol{0}, Q).
\end{equation}
Using \eqref{partial_derivatives_mean_value_theorem}, \eqref{convergence_second_order_derivatives}, and \eqref{CLT_first_derivative}, we obtain that
\begin{equation*}
  W \sqrt{n} (\hat{\boldsymbol{\theta}}_{n} - \boldsymbol{\theta}_{0}) \xrightarrow[n \to \infty]{d} \mathcal{N}(\boldsymbol{0}, Q).
\end{equation*}
Since $W$ is symmetric, we conclude that
\begin{equation*}
  \sqrt{n} (\hat{\boldsymbol{\theta}}_{n} - \boldsymbol{\theta}_{0}) \xrightarrow[n \to \infty]{d} \mathcal{N}(\boldsymbol{0}, \Sigma_{0}),
\end{equation*}
where $\Sigma_{0} = W^{-1}QW^{-1}$. To establish \eqref{convergence_second_order_derivatives} we show that
\begin{align} 
  &\abs{\hat{K}_{n}^{(i,j)}(\tilde{\boldsymbol{\theta}}_{n}^{(i)}) - \hat{K}_{n}^{(i,j)}(\boldsymbol{\theta}_{0})} \xrightarrow[n \to \infty]{p} 0 \text{ and } \label{convergence_second_order_derivatives_1} \\
  &\hat{K}_{n}^{(i,j)}(\boldsymbol{\theta}_{0}) \xrightarrow[n \to \infty]{p} K^{(i,j)}(\boldsymbol{\theta}_{0}). \label{convergence_second_order_derivatives_2}
\end{align}
\eqref{convergence_second_order_derivatives_1} is obtained as in (3.20a) of \citet{Lii-1992} using the consistency of the estimator $\hat{\boldsymbol{\theta}}_{n}$, the uniformly boundedness of the third order partial derivatives of the function $\boldsymbol{\theta} \mapsto \log(g(u,\boldsymbol{\theta})$, and that by Lemma \ref{lemma:clt} (i) with $G(u) = 1/(1+u^{2})$
\begin{equation} \label{convergence_s2}
    \hat{s}_{n}^{2}(\boldsymbol{\theta}_{0}) \xrightarrow[n \to \infty]{p} s^{2}(\boldsymbol{\theta}_{0}).
\end{equation}
Similarly, \eqref{convergence_second_order_derivatives_2} is obtained using Lemma \ref{lemma:clt} (i) with
\begin{equation*}
  G(u) = \biggl( \frac{\partial^{2} \log(g(u,\boldsymbol{\theta}_{0}))}{\partial \theta_{i} \partial \theta_{j}} \biggr) \cdot \frac{1}{1+u^{2}}.
\end{equation*}  
We now turn to the proof of \eqref{CLT_first_derivative}. We show that
\begin{equation*}
  \sqrt{n} \begin{pmatrix}
    \hat{K}_{n}^{(1)}(\boldsymbol{\theta}_{0}) - K^{(1)}(\boldsymbol{\theta}_{0}) \\
    \vdots \\
    \hat{K}_{n}^{(p+q)}(\boldsymbol{\theta}_{0}) - K^{(p+q)}(\boldsymbol{\theta}_{0})
\end{pmatrix} \xrightarrow[n \to \infty]{d} \mathcal{N}(\boldsymbol{0}, Q).
\end{equation*}
Let $\alpha_{s} \in \R$ for $s=1,\dots,p+q$. By the Cram\'{e}r–Wold device it suffices to show that $J_{n}$ with
\begin{equation*}
  G(u) = \sum_{s=1}^{p+q} \alpha_{s} \varphi^{(s)}(u,\boldsymbol{\theta}_{0})
\end{equation*}
is asymptotically normal with mean zero and variance $\sum_{s=1}^{p+q} \sum_{t=1}^{p+q} \alpha_{s} q_{s,t} \alpha_{t}$. This follows from Lemma \ref{lemma:clt} (i) and the variance calculation in Appendices \ref{sec:covariance_matrix_Q} (in particular, see \eqref{limiting_variance_sum_Gs} with $G_{s}=\varphi^{(s)}(\cdot,\boldsymbol{\theta}_{0})$). To conclude we notice that as in \citet{Lii-1992}
\begin{align*}
  K^{(i)}(\boldsymbol{\theta}_{0}) &= \int_{-\infty}^{\infty} \varphi^{(i)}(u,\boldsymbol{\theta}_{0}) \, du \\
  &=s^{2}(\boldsymbol{\theta}_{0}) \int_{-\infty}^{\infty} \frac{\partial g(u,\boldsymbol{\theta}_{0})}{\partial \theta_{i}} \frac{1}{1+u^{2}} \, du = 0
\end{align*}
for all $i=1,\dots,p+q$ since
\begin{equation*}
  \int_{-\infty}^{\infty} \frac{g(u,\boldsymbol{\theta}_{0})}{1+u^{2}} \, du = 1.
\end{equation*}
Finally, the proof of Theorem \ref{theorem:asymptotic_normality} (ii) is similar using part (ii) of Lemma \ref{lemma:clt}.
  
\section{Concluding remarks} \label{sec:concluding_remarks}

Under a finite $4+\delta$ moment assumption, we establish consistency and asymptotic normality of a Whittle-type estimator of the parameters of a L\'evy-driven CARMA process sampled at a renewal sequence. Our results rely on Proposition 4.1 of \citet{Brandes-2023}, which ensures that the sampled process is strongly mixing with exponentially decaying coefficients. Since their results require only stationarity and strongly mixing properties we expect that our results carry over to more general classes of processes, including L\'evy-driven moving average processes \citep{Brandes-2019}. However, when the sampling window grows (see Lemma \ref{lemma:clt} (ii) above), our proof relies on Theorem 1 of \citet{Yokoyama-1980} (see also Theorems 2.1-2.2 of \citet{Yang-2007}), which yields a trade-off between moment assumptions and the rate of decay of the mixing coefficients. This means that a stronger moment assumption is required if the mixing coefficients decay only polynomially fast. The investigation of these issues is the subject of ongoing research.

\newpage

\appendix

\section{Regularity of the spectral density} \label{sec:regularity}

In this section we establish regularity for the rescaled spectral density \eqref{function_g}, in particular, equicontinuity and differentiability. We begin by establishing a uniform upper bound on the function $t \mapsto \mathbf{b}^{\top} e^{A t} \Sigma \mathbf{b}$.

\subsection{Uniform upper bounds} \label{sec:uniform_upper_bounds}

We require a uniform upper bound on the matrix exponential of $A$ over $\boldsymbol{\theta} \in \Theta$. To this end, let $\lambda_{1}, \dots, \lambda_{p}$ be the eigenvalues of $A$ and $\alpha(A) = \max_{j=1,\dots,p} Re(\lambda_{j})$ be the spectral abscissa of $A$, where $Re(z)$ is the real part of the complex number $z$. The eigenvalues $\lambda_{j}$ are not necessarily distinct and correspond to the zeros of the polynomial $a(\cdot)$. Since the zeros of a polynomial vary continuously as a function of the coefficients (see e.g.\ \citet{Harris-1987}), $\alpha(A)$ varies continuously as a function of $\boldsymbol{\theta}$. Using \ref{H3}, we see that $\alpha(A)$ is bounded above by a negative constant uniformly over $\boldsymbol{\theta} \in \Theta$. Furthermore, the matrix $A$ is infinitely often differentiable with respect to the parameter $\boldsymbol{\theta}$. This shows that the conditions of Lemma 9.9 of \citet{Khalil-2002} are satisfied. Its proof yields constants $\tilde{C}, D > 0$ such that
\begin{equation} \label{universal_exponential_decrease_exponential_matrix}
  \norm{e^{A t}}, \norm{e^{A^{\top} t}} \leq \tilde{C} e^{-Dt}
\end{equation}
for all $t \geq 0$ and $\boldsymbol{\theta} \in \Theta$. Using \eqref{autocovariance}, \eqref{universal_exponential_decrease_exponential_matrix}, and
\begin{equation} \label{norm_sigma_integrand}
  \norm{e^{A s} \mathbf{e}_{p} \mathbf{e}_{p}^{\top} e^{A^{\top} s}} \leq \norm{e^{A s} \mathbf{e}_{p}}^{2} \leq \norm{e^{A s}}^{2},
\end{equation}
we have    
\begin{equation} \label{norm_sigma}
  \norm{\Sigma} \leq \frac{\tilde{C}^{2}}{2D}.
\end{equation}
Using \eqref{universal_exponential_decrease_exponential_matrix}, \eqref{norm_sigma}, and the fact that $\norm{\mathbf{b}}$ is uniformly bounded over $\Theta$, we obtain a constant $\doubletilde{C}>0$ such that
\begin{equation} \label{exponential_decrease_autocovariance}
  \abs{\mathbf{b}^{\top} e^{A t} \Sigma \mathbf{b}} \leq \doubletilde{C} e^{-Dt}
\end{equation}
for all $t \geq 0$ and $\boldsymbol{\theta} \in \Theta$.

\subsection{Equicontinuity} \label{sec:equicontinuity}

In this section we show the following equicontinuity condition, which is used in the proof of Theorem \ref{theorem:asymptotic_normality}: for all $\epsilon>0$ there exists $\delta>0$ such that
\begin{equation} \label{equicontinuity_g}
  \abs{g(u,\boldsymbol{\theta}^{(1)}) - g(u,\boldsymbol{\theta}^{(2)})} \leq \epsilon
\end{equation}
for all $u \in \R$ and $\boldsymbol{\theta}^{(1)}, \boldsymbol{\theta}^{(2)} \in \Theta$ such that $\norm{\boldsymbol{\theta}^{(1)}-\boldsymbol{\theta}^{(2)}} \leq \delta$. We recall from \eqref{spectral_density_Z} and \eqref{function_g} that
\begin{equation} \label{function_g_as_ratio}
  g(u,\boldsymbol{\theta}) = \frac{ h(\boldsymbol{\theta}) + g_{1}(u,\boldsymbol{\theta}) }{\pi h(\boldsymbol{\theta}) + \int_{-\infty}^{\infty} \frac{g_{1}(u,\boldsymbol{\theta})}{1+u^{2}} \, du},
\end{equation}
where
\begin{equation} \label{h_and_g1}
  h(\boldsymbol{\theta}) \coloneqq \mathbf{b}^{\top} \Sigma \mathbf{b} \text{ and }
  g_{1}(u,\boldsymbol{\theta}) \coloneqq \int_{-\infty}^{\infty} e^{-\iu hu} \, \mathbf{b}^{\top} e^{A \abs{h}} \Sigma \mathbf{b} \, r(\abs{h}) \, dh.
\end{equation}
We notice that
\begin{equation} \label{g1}
  g_{1}(u,\boldsymbol{\theta}) = 2 \int_{0}^{\infty} \cos(tu) g_{2}(t,\boldsymbol{\theta}) r(t) \, dt, \text{ where } g_{2}(t,\boldsymbol{\theta}) \coloneqq \mathbf{b}^{\top} e^{A t} \Sigma \mathbf{b}.
\end{equation}
Using \eqref{autocovariance} and \eqref{universal_exponential_decrease_exponential_matrix} one can show that the function $(t,\boldsymbol{\theta}) \mapsto g_{2}(t,\boldsymbol{\theta})$ is continuous (see Lemma \ref{lemma:g2_infinitely_differentiable} below). Furthermore, \eqref{exponential_decrease_autocovariance} yields that
\begin{equation} \label{exponential_decrease_g2}
  \abs{g_{2}(t,\boldsymbol{\theta})} \leq \doubletilde{C} e^{-Dt}
\end{equation}
for all $t \geq 0$ and $\boldsymbol{\theta} \in \Theta$. By a change of variable we have
\begin{equation*}
  g_{1}(u,\boldsymbol{\theta}) = - 2 \int_{-\pi/u}^{\infty} \cos(tu) g_{2}(t+\pi/u,\boldsymbol{\theta}) r(t+\pi/u) \, dt.
\end{equation*}
By taking the average of this and \eqref{g1} we see that
\begin{align*}
  \abs{g_{1}(u,\boldsymbol{\theta})} &\leq \int_{-\pi/u}^{0} g_{2}(t+\pi/u,\boldsymbol{\theta}) r(t+\pi/u) \, dt \\
  &+\int_{0}^{\infty} \abs{g_{2}(t,\boldsymbol{\theta}) r(t) - g_{2}(t+\pi/u,\boldsymbol{\theta}) r(t+\pi/u)} \, dt.
\end{align*}
Using that $r(\cdot)$ is bounded and continuous by \ref{H2} and $g_{2}(\cdot)$ is continuous and bounded by \eqref{exponential_decrease_g2}, we see that the above RHS converges to zero uniformly in $\boldsymbol{\theta} \in \Theta$ as $u \to \infty$. In particular, for all $\epsilon>0$ there exists $N>0$ such that
\begin{equation} \label{g1_vanishing}
  \sup_{\boldsymbol{\theta} \in \Theta, \, \abs{u} > N} \abs{g_{1}(u,\boldsymbol{\theta})} \leq \frac{\epsilon}{4}.
\end{equation}
Next, using \eqref{exponential_decrease_g2} we see that
\begin{align*}
  \abs{g_{1}(u^{(1)},\boldsymbol{\theta}^{(1)}) - g_{1}(u^{(2)},\boldsymbol{\theta}^{(2)})} &\leq 2 \int_{0}^{M} \abs{\cos(tu^{(1)}) g_{2}(t,\boldsymbol{\theta}^{(1)}) - \cos(tu^{(2)}) g_{2}(t,\boldsymbol{\theta}^{(2)})} r(t) \, dt \\
  &+4 \doubletilde{C} \int_{M}^{\infty} e^{-Dt} r(t) \, dt.
\end{align*}  
Since $r(\cdot)$ is bounded by \ref{H2}, the second term on the RHS can be made arbitrarily small by choosing large $M$. Using the continuity of cosine and $g_{2}(\cdot)$ one can show that the function $(u,\boldsymbol{\theta}) \mapsto g_{1}(u,\boldsymbol{\theta})$ is continuous. In particular, $g_{1}(\cdot)$ is uniformly continuous over $[-N,N] \times \Theta$. Using compactness, one can show that there exists $\delta>0$ such that
\begin{equation*}
  \abs{g_{1}(u,\boldsymbol{\theta}^{(1)}) - g_{1}(u,\boldsymbol{\theta}^{(2)})} \leq \frac{\epsilon}{2}
\end{equation*}
for all $u \in [-N,N]$ and $\boldsymbol{\theta}^{(1)}, \boldsymbol{\theta}^{(2)} \in \Theta$ such that $\norm{\boldsymbol{\theta}^{(1)}-\boldsymbol{\theta}^{(2)}} \leq \delta$. Combining this with \eqref{g1_vanishing}, we see that
\begin{equation} \label{equicontinuity_g1}
  \abs{g_{1}(u,\boldsymbol{\theta}^{(1)}) - g_{1}(u,\boldsymbol{\theta}^{(2)})} \leq \epsilon
\end{equation}
for all $u \in \R$ and $\boldsymbol{\theta}^{(1)}, \boldsymbol{\theta}^{(2)} \in \Theta$ such that $\norm{\boldsymbol{\theta}^{(1)}-\boldsymbol{\theta}^{(2)}} \leq \delta$. Additionally, the continuity of $g_{1}(\cdot)$ implies that the map
\begin{equation} \label{integral_g1}
  \boldsymbol{\theta} \mapsto \int_{-\infty}^{\infty} \frac{g_{1}(u,\boldsymbol{\theta})}{1+u^{2}} \, du  
\end{equation}
is continuous. It is easy to see that $h(\cdot)$ is continuous (c.f.\ Lemma \ref{lemma:g2_infinitely_differentiable} below), and since $\Sigma$ is positive definite, it is bounded away from $0$. Using this, \eqref{integral_g1} and \eqref{equicontinuity_g1} in \eqref{function_g_as_ratio}, we obtain \eqref{equicontinuity_g}. Additionally, since $g(\cdot)$ is uniformly bounded away from zero and infinity, and the logarithm function is continuous, \eqref{equicontinuity_g} holds with $g(\cdot)$ replaced by $\log(g(\cdot))$, that is, for all $\epsilon>0$ there exists $\delta>0$ such that
\begin{equation} \label{equicontinuity_log_g}
    \abs{\log(g(u,\boldsymbol{\theta}^{(1)})) - \log(g(u,\boldsymbol{\theta}^{(2)}))} \leq \epsilon
\end{equation}
for all $u \in \R$ and $\boldsymbol{\theta}^{(1)}, \boldsymbol{\theta}^{(2)} \in \Theta$ such that $\norm{\boldsymbol{\theta}^{(1)}-\boldsymbol{\theta}^{(2)}} \leq \delta$.

\subsection{Partial derivatives} \label{sec:partial_derivatives}

We begin by showing that the function $\boldsymbol{\theta} \mapsto g(u,\boldsymbol{\theta})$ is infinitely often differentiable. Since the denominator in \eqref{function_g_as_ratio} is positive for all $\boldsymbol{\theta} \in \Theta$, it is enough to show that the functions in \eqref{h_and_g1} and \eqref{integral_g1} are infinitely often differentiable with respect to $\boldsymbol{\theta}$.
\begin{lemma} \label{lemma:g2_infinitely_differentiable}
  Assume \ref{H1}-\ref{H3}. Then, the function $(t,\boldsymbol{\theta}) \mapsto g_{2}(t,\boldsymbol{\theta})$ is infinitely often differentiable and its partial derivatives decrease exponentially fast in $t \geq 0$ uniformly over $\boldsymbol{\theta} \in \Theta$.
\end{lemma}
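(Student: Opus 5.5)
The plan is to work with the explicit representation $g_{2}(t,\boldsymbol{\theta}) = \mathbf{b}^{\top} e^{A t} \Sigma \mathbf{b}$. Here $\mathbf{b}$ is affine in $\boldsymbol{\theta}$ and $A=A(\boldsymbol{\theta})$ is affine in the coefficients $a_{1},\dots,a_{p}$. Smoothness in $t$ is immediate, since $\partial_{t}^{k} e^{At} = A^{k} e^{At}$. For smoothness in $\boldsymbol{\theta}$ I will show separately that $(t,\boldsymbol{\theta}) \mapsto e^{A(\boldsymbol{\theta})t}$ and $\boldsymbol{\theta} \mapsto \Sigma(\boldsymbol{\theta})$ are $C^{\infty}$ with exponentially decaying derivatives. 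The product rule then finishes the argument, because $\norm{\mathbf{b}}$, $\norm{A}$ and their $\boldsymbol{\theta}$-derivatives are uniformly bounded on the compact set $\Theta$.

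\textbf{Step 1 (matrix exponential).} The exponential $e^{A(\boldsymbol{\theta})t}$ is an entire function of the entries of $A$ and $t$, so it is $C^{\infty}$ (even real analytic). For the bounds I use the Duhamel/variation-of-constants formula
\begin{equation*}
  \partial_{\theta_{i}} e^{A t} = \int_{0}^{t} e^{A(t-s)} (\partial_{\theta_{i}} A) e^{A s} \, ds .
\end{equation*}
Combined with \eqref{universal_exponential_decrease_exponential_matrix}, this gives $\norm{\partial_{\theta_{i}} e^{At}} \leq \tilde{C}^{2} \norm{\partial_{\theta_{i}}A}\, t e^{-Dt}$. Since $A$ is affine in $\boldsymbol{\theta}$, $\partial_{\theta_{i}}A$ is constant and all second derivatives of $A$ vanish. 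Iterating the formula by induction, each higher derivative $\partial^{\alpha}_{\boldsymbol{\theta}} e^{At}$ is a finite sum of iterated integrals of products of factors $e^{A(\cdot)}$ and constant matrices. Each such term is bounded by $c_{\alpha} (1+t)^{\abs{\alpha}} e^{-Dt}$, uniformly on $\Theta$. Mixed derivatives in $t$ only add a factor $A^{k}$, which is uniformly bounded. Replacing $D$ by any $D'<D$ absorbs the polynomial factor, so every partial derivative is bounded by $C_{\alpha,k} e^{-D' t}$.

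\textbf{Step 2 ($\Sigma$).} I write $\Sigma = \int_{0}^{\infty} e^{As}\mathbf{e}_{p}\mathbf{e}_{p}^{\top} e^{A^{\top}s}\,ds$. By Step 1, the $\boldsymbol{\theta}$-derivatives of the integrand are dominated by $C e^{-2D's}$, uniformly in $\boldsymbol{\theta}$, and this bound is integrable. Dominated convergence (differentiation under the integral sign) therefore shows that $\Sigma$ is $C^{\infty}$ with bounded derivatives on $\Theta$. Alternatively, one could note that $\Sigma$ solves the Lyapunov equation $A\Sigma+\Sigma A^{\top} = -\mathbf{e}_{p}\mathbf{e}_{p}^{\top}$. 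The operator $X\mapsto AX+XA^{\top}$ is invertible because $\alpha(A)<0$, so its inverse is smooth in $\boldsymbol{\theta}$.

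\textbf{Step 3.} Applying the Leibniz rule to the product $\mathbf{b}^{\top} e^{At}\Sigma\mathbf{b}$ combines Steps 1 and 2 and yields the claim. The main obstacle is Step 1: getting decay bounds for the derivatives that are uniform over $\Theta$. A naive approach through the Jordan form breaks down because eigenvalues may collide as $\boldsymbol{\theta}$ varies. The Duhamel formula avoids this, using only the uniform bound \eqref{universal_exponential_decrease_exponential_matrix}, at the cost of polynomial factors in $t$, which are absorbed by a slightly smaller exponential rate.
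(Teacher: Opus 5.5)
Your proposal is correct and follows essentially the same route as the paper: reduce via the product rule to smoothness of $e^{At}$, whose derivatives are bounded by polynomial-in-$t$ times $e^{-Dt}$ factors coming from the uniform bound and absorbed into a slightly smaller rate, and of $\Sigma$, by differentiating under the integral. Your Duhamel formula just makes explicit the $t^{k}e^{-Dt}$ factors the paper only alludes to, and the Lyapunov-equation alternative for $\Sigma$ is a valid extra but not needed.
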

\begin{proof}
  By the product rule for the derivatives of products it is enough to show that the functions $(t,\boldsymbol{\theta}) \mapsto e^{At}$ and $\boldsymbol{\theta} \mapsto \Sigma$ are infinitely often differentiable, the partial derivatives of $(t,\boldsymbol{\theta}) \mapsto e^{At}$ decrease exponentially fast in $t$ uniformly over $\boldsymbol{\theta} \in \Theta$, and the partial derivatives of $\boldsymbol{\theta} \mapsto \Sigma$ are uniformly bounded. It is easy to see that $(t,\boldsymbol{\theta}) \mapsto e^{At}$ is infinitely often differentiable. Furthermore, using \eqref{universal_exponential_decrease_exponential_matrix}, one can show the required exponential decay (notice that the norm of $A$ and its partial derivatives are uniformly bounded in $\boldsymbol{\theta} \in \Theta$ by \ref{H3} and $t^{k}e^{-Dt}$ decreases exponentially for every fixed $k \in \N_{0}$). Turning to the function $\boldsymbol{\theta} \mapsto \Sigma$ the required conditions can be obtained using \eqref{autocovariance}, the differentiability of $(t,\boldsymbol{\theta}) \mapsto e^{At}$ with exponentially fast decay of the partial derivatives, as well as \eqref{norm_sigma_integrand} and \eqref{norm_sigma}.
\end{proof}
In particular, by taking $t=0$ we see that $h(\cdot)$ is infinitely often differentiable and its partial derivatives are uniformly bounded in $\Theta$. Using Lemma \ref{lemma:g2_infinitely_differentiable}, the mean value theorem and dominated convergence theorem, we obtain that the function $\boldsymbol{\theta} \mapsto g_{1}(u,\boldsymbol{\theta})$ is infinitely often differentiable as well. Furthermore, this function and its partial derivatives are uniformly bounded in both $\boldsymbol{\theta} \in \Theta$ and $u \in \R$. In particular, the function $g_{1}(u,\boldsymbol{\theta})/(1+u^{2})$ and its partial derivatives are uniformly bounded by a constant times $1/(1+u^{2})$. Using again the mean value theorem and the dominated convergence theorem, we obtain that the function in \eqref{integral_g1} is infinitely often differentiable. This shows the desired differentiability for the function $\boldsymbol{\theta} \mapsto g(u,\boldsymbol{\theta})$. Furthermore, we have shown in Section \ref{sec:equicontinuity} that $g(u,\boldsymbol{\theta})$ is uniformly bounded away from zero and infinity. We deduce that the function $\boldsymbol{\theta} \mapsto \log(g(u,\boldsymbol{\theta}))$ is infinitely often differentiable and its derivatives are uniformly bounded in absolute value. This shows that it is possible to differentiate the functions $K(\cdot)$ and $\hat{K}_{n}(\cdot)$ under the integral sign with the aid of the mean value theorem and dominated convergence theorem. In particular, we have
\begin{align*}
  K^{(i)}(\boldsymbol{\theta}) &= \frac{\partial K(\boldsymbol{\theta})}{\partial \theta_{i}} = \int_{-\infty}^{\infty} \frac{\partial \log(g(u,\boldsymbol{\theta}))}{\partial \theta_{i}} \frac{\phi_{Z}(u, \boldsymbol{\theta}_{0})}{1+u^{2}} \, du \text{ and} \\
  K^{(i,j)}(\boldsymbol{\theta}) &= \frac{\partial^{2} K(\boldsymbol{\theta})}{\partial \theta_{i} \partial \theta_{j}} = \int_{-\infty}^{\infty} \frac{\partial^{2} \log(g(u,\boldsymbol{\theta}))}{\partial \theta_{i} \partial \theta_{j}} \frac{\phi_{Z}(u, \boldsymbol{\theta}_{0})}{1+u^{2}} \, du,
\end{align*}
where $\theta_{i}$ is the $i^{\texttt{th}}$ component of $\boldsymbol{\theta}$ and $i,j=1,\dots,p+q$. Similarly, we obtain
\begin{align*}
  \hat{K}_{n}^{(i)}(\boldsymbol{\theta}) &= \frac{\partial \hat{K}_{n}(\boldsymbol{\theta})}{\partial \theta_{i}} = \int_{-\infty}^{\infty} \frac{\partial \log(g(u,\boldsymbol{\theta}))}{\partial \theta_{i}} \frac{I_{Z,n}(u)}{1+u^{2}} \, du \text{ and } \\
  \hat{K}_{n}^{(i,j)}(\boldsymbol{\theta}) &= \frac{\partial^{2} \hat{K}_{n}(\boldsymbol{\theta})}{\partial \theta_{i} \partial \theta_{j}} = \int_{-\infty}^{\infty} \frac{\partial^{2} \log(g(u,\boldsymbol{\theta}))}{\partial \theta_{i} \partial \theta_{j}} \frac{I_{Z,n}(u)}{1+u^{2}} \, du.
\end{align*}
Analogously one can obtain higher order partial derivatives.

\section{Moments and variance calculations} \label{sec:moments_and_variance}

In this section we compute mean and covariance of $(U_{j}(k))_{j \in \N_{0}}$ and the limiting variance of $\sqrt{n} J_{n}$.

\subsection{Explicit formulas} \label{sec:explicit_formulas}

\noindent We begin by computing mean and covariance of $(U_{j}(k))_{j \in \N_{0}}$. By stationarity, it is enough to determine $\bE{U_{0}(k)}$ and $\bE{U_{0}(l)U_{j}(k)}$ for all $j,k,l \in \mathbb{N}_{0}$. Conditioning on $\tau$ and using independence of $Y$ and $\tau$ we obtain that
\begin{equation} \label{expectation_U}
  \bE{U_{0}(k)} = \bE{\hat{G}_{R}(\tau_{k}) \gamma_{Y}(\tau_{k})} = \begin{cases}
\hat{G}_{R}(0) \gamma_{Y}(0) &\text{ if } k=0 \\
\int_{0}^{\infty} \hat{G}_{R}(s) \gamma_{Y}(s) f^{\ast k}(s) \, ds &\text{ if } k \in \mathbb{N}.
\end{cases}
\end{equation}
We now turn to the expectation of the product. We recall from \eqref{CARMA} and Lemma 4.1 of \citet{Brandes-2019} that for all $s,t,u \geq 0$
\begin{align}
  M(s,t,u) &\coloneqq \bE{Y(0)Y(s)Y(s+t)Y(s+t+u)} \nonumber \\
  &=N(s,t,u) +\gamma_{Y}(s)\gamma_{Y}(u)+\gamma_{Y}(s+t)\gamma_{Y}(t+u)+\gamma_{Y}(s+t+u)\gamma_{Y}(t), \label{CARMAmoments}
\end{align}
where we have used that $Y$ has mean zero and $N(s,t,u)$ is the $4^{\text{th}}$ order joint cumulant of $Y(0)$, $Y(s)$, $Y(s+t)$ and $Y(s+t+u)$, which is given by
\begin{equation*}
  N(s,t,u) = (\bE{L^{4}(1)} - 3\sigma_{L}^{4}) \int_{0}^{\infty} \mathbf{b}^{\top} e^{Av} \mathbf{e}_{p} \mathbf{b}^{\top} e^{A(v+s)} \mathbf{e}_{p} \mathbf{b}^{\top} e^{A(v+s+t)} \mathbf{e}_{p} \mathbf{b}^{\top} e^{A(v+s+t+u)} \mathbf{e}_{p}  \, dv.
\end{equation*}  
Now, by conditioning on $\tau$ and using independence of $Y$ and $\tau$, we have
\begin{equation*}
  \bE{U_{0}(l) U_{j}(k)} = \bE{\hat{G}_{R}(\tau_{l}) \hat{G}_{R}(\tau_{j+k}-\tau_{j}) \bE{Y(0) Y(\tau_{l}) Y(\tau_{j}) Y(\tau_{j+k}) \, | \, \tau}},
\end{equation*}
where $\bE{Y(0) Y(\tau_{l}) Y(\tau_{j}) Y(\tau_{j+k}) \, | \, \tau}$ is equal to
\begin{equation*}
  \begin{cases}
M(0,0,0) &\text{ if } \mathcal{C}_{1} \coloneqq [j=k=l=0] \\
M(0,0,\tau_{l}) &\text{ if } \mathcal{C}_{2} \coloneqq [j=k=0<l] \\
M(0,0,\tau_{k}) &\text{ if } \mathcal{C}_{3} \coloneqq [j=l=0<k] \\
M(0,\tau_{j},0) &\text{ if } \mathcal{C}_{4} \coloneqq [k=l=0<j] \\    
M(0,\tau_{l},0) &\text{ if } \mathcal{C}_{5} \coloneqq [j=0<k=l] \\    
M(0,\tau_{k},\tau_{l}-\tau_{k}) &\text{ if } \mathcal{C}_{6} \coloneqq [j=0<k<l] \\
M(0,\tau_{l},\tau_{k}-\tau_{l}) &\text{ if } \mathcal{C}_{7} \coloneqq [j=0<l<k] \\
M(\tau_{l},0,0) &\text{ if } \mathcal{C}_{8} \coloneqq [k=0<j=l] \\
M(\tau_{j},0,\tau_{l}-\tau_{j}) &\text{ if } \mathcal{C}_{9} \coloneqq [k=0<j<l] \\
M(\tau_{l},\tau_{j}-\tau_{l},0) &\text{ if } \mathcal{C}_{10} \coloneqq [k=0<l<j] \\
M(0,\tau_{j},\tau_{j+k}-\tau_{j}) &\text{ if } \mathcal{C}_{11} \coloneqq [j,k>0, \, l=0] \\
M(\tau_{l},\tau_{j}-\tau_{l},\tau_{j+k}-\tau_{j}) &\text{ if } \mathcal{C}_{12} \coloneqq [j,k>0, \; 0<l<j] \\
M(\tau_{l},0,\tau_{l+k}-\tau_{l}) &\text{ if } \mathcal{C}_{13} \coloneqq [j,k>0, \; l=j] \\
M(\tau_{j},\tau_{l}-\tau_{j},\tau_{j+k}-\tau_{l}) &\text{ if } \mathcal{C}_{14} \coloneqq [j,k>0, \; j<l<j+k] \\
M(\tau_{j},\tau_{j+k}-\tau_{j},0) &\text{ if } \mathcal{C}_{15} \coloneqq [j,k>0, \; l=j+k] \\
M(\tau_{j},\tau_{j+k}-\tau_{j},\tau_{l}-\tau_{j+k}) &\text{ if } \mathcal{C}_{16} \coloneqq [j,k>0, \; l>j+k]
\end{cases}
\end{equation*}
with the notation $\mathcal{C}_{1}$ representing the condition $j=k=l=0$ on the indices $j,k,l \in \mathbb{N}_{0}$, and similarly for the other cases. Thus, we obtain that $\bE{U_{0}(l) U_{j}(k)}$ is equal to \newline
\begin{equation} \label{covariance_U}
   \begin{cases}
\hat{G}_{R}^{2}(0) M(0,0,0) &\text{ if } \mathcal{C}_{1} \\
\hat{G}_{R}(0) \int \hat{G}_{R}(s) M(0,0,s) \, dF^{\ast l}(s) &\text{ if } \mathcal{C}_{2} \\
\hat{G}_{R}(0) \int \hat{G}_{R}(s) M(0,0,s) \, dF^{\ast k}(s) &\text{ if } \mathcal{C}_{3} \\
\hat{G}_{R}^{2}(0) \int M(0,s,0) \, dF^{\ast j}(s) &\text{ if } \mathcal{C}_{4} \\    
\int \hat{G}_{R}^{2}(s) M(0,s,0) \, dF^{\ast l}(s) &\text{ if } \mathcal{C}_{5} \\
\int \hat{G}_{R}(s) \hat{G}_{R}(s+t) M(0,s,t) \, dF^{\ast k}(s) dF^{\ast (l-k)}(t) &\text{ if } \mathcal{C}_{6} \\
\int \hat{G}_{R}(s) \hat{G}_{R}(s+t) M(0,s,t) \, dF^{\ast l}(s) dF^{\ast (k-l)}(t) &\text{ if } \mathcal{C}_{7} \\
\hat{G}_{R}(0) \int \hat{G}_{R}(s) M(s,0,0) \, dF^{\ast l}(s) &\text{ if } \mathcal{C}_{8} \\
\hat{G}_{R}(0) \int \hat{G}_{R}(s+t) M(s,0,t) \, dF^{\ast j}(s) dF^{\ast (l-j)}(t) &\text{ if } \mathcal{C}_{9} \\
\hat{G}_{R}(0) \int \hat{G}_{R}(s) M(s,t,0) \, dF^{\ast l}(s) dF^{\ast (j-l)}(t) &\text{ if } \mathcal{C}_{10} \\
\hat{G}_{R}(0) \int \hat{G}_{R}(t) M(0,s,t) \, dF^{\ast j}(s) dF^{\ast k}(t) &\text{ if }  \mathcal{C}_{11} \\
\int \hat{G}_{R}(s) \hat{G}_{R}(u) M(s,t,u) \, dF^{\ast l}(s) dF^{\ast (j-l)}(t) dF^{\ast k}(u) &\text{ if } \mathcal{C}_{12} \\
\int \hat{G}_{R}(s) \hat{G}_{R}(t) M(s,0,t) \, dF^{\ast l}(s) dF^{\ast k}(t) &\text{ if } \mathcal{C}_{13} \\
\int \hat{G}_{R}(s+t) \hat{G}_{R}(t+u) M(s,t,u) \, dF^{\ast j}(s) dF^{\ast (l-j)}(t) dF^{\ast (j+k-l)}(u) &\text{ if } \mathcal{C}_{14} \\
\int \hat{G}_{R}(s+t) \hat{G}_{R}(t) M(s,t,0) \, dF^{\ast j}(s) dF^{\ast k}(t) &\text{ if } \mathcal{C}_{15} \\
\int \hat{G}_{R}(s+t+u) \hat{G}_{R}(t) M(s,t,u) \, dF^{\ast j}(s) dF^{\ast k}(t) dF^{\ast (l-k-j)}(u) &\text{ if } \mathcal{C}_{16}.
\end{cases}
\end{equation}

\subsection{Upper bounds} \label{sec:upper_bounds}

We use the formulas in Appendices \ref{sec:explicit_formulas} to obtain upper bounds on the mean and covariance of $(U_{j}(k))_{j \in \N_{0}}$. First, using \eqref{Laplace_transform}, \eqref{expectation_U}, the fact that $\hat{G}_{R}$ is bounded in absolute value by $\norm{G}_{L^{1}}/(2\pi)$, and Lemma \ref{lemma:autocovariance}, we obtain that for all $k \in \mathbb{N}_{0}$
\begin{equation}
  \abs{\bE{U_{0}(k)}} \leq \biggl( \frac{C \norm{G}_{L^{1}}}{2 \pi} \biggr) (\mathcal{L}(D))^{k}. \label{moments_decrease_exponentially}
\end{equation}
Next, we use \eqref{covariance_U} to obtain absolute upper bounds on the covariance of $(U_{j}(k))_{j \in \N_{0}}$. Let $\bar{M}(s,t,u) \coloneqq M(s,t,u) - \gamma_{Y}(s) \gamma_{Y}(u)$. We notice that
\begin{equation} \label{covariance_U_2}
  \bE{U_{0}(l) U_{j}(k)} = \bar{T}_{j,k,l} + T_{j,k,l},
\end{equation}
where the term $\bar{T}_{j,k,l}$ (resp.\ $T_{j,k,l}$) is given by \eqref{covariance_U} with $M(s,t,u)$ replaced by $\bar{M}(s,t,u)$ (resp.\ $\gamma_{Y}(s) \gamma_{Y}(u)$). We derive below absolute upper bounds on both $\bar{T}_{j,k,l}$ and $T_{j,k,l}$. Using the properties of the operator norm and \eqref{universal_exponential_decrease_exponential_matrix}, we have
\begin{equation*}
    \abs{\mathbf{b}^{\top} e^{At} \mathbf{e}_{p}} \leq \norm{e^{A^{\top}t} \mathbf{b}} \leq \tilde{C} e^{-Dt} \norm{\mathbf{b}}
\end{equation*}
for all $t \geq 0$. It follows that
\begin{equation*}
  \frac{N(s,t,u)}{\bE{L^{4}(1)} - 3\sigma_{L}^{4}} \leq (\tilde{C} \norm{\mathbf{b}})^{4} e^{-D(3s+2t+u)} \int_{0}^{\infty} e^{-4Dv} \, dv = \frac{(\tilde{C} \norm{\mathbf{b}})^{4}}{4D} e^{-D(3s+2t+u)}.
\end{equation*}
Using the above inequality and Lemma \ref{lemma:autocovariance} in \eqref{CARMAmoments}, we obtain
\begin{equation*}
  \abs{\bar{M}(s,t,u)} \leq \frac{(\tilde{C} \norm{\mathbf{b}})^{4} (\bE{L^{4}(1)} - 3\sigma_{L}^{4})}{4D} e^{-D(3s+2t+u)} + 2 C^{2} e^{-D(s+2t+u)}.
\end{equation*}
Thus, there is a positive constant $\bar{C}$ such that
\begin{equation}
  \abs{\bar{M}(s,t,u)} \leq \bar{C} e^{-D(s+2t+u)} \label{upper_bound_Mbar}
\end{equation}
for all $s,t,u \geq 0$. Using \eqref{upper_bound_Mbar} and $\mathcal{L}(2D) \leq \mathcal{L}(D)$, one can show that
\begin{equation} \label{upper_bound_Tbar}
  \abs{\bar{T}_{j,k,l}} \leq \biggl( \frac{\bar{C} \norm{G}_{L^{1}}}{2\pi} \biggr)^{2} \bar{\mathcal{T}}_{j,k,l}(D),
\end{equation}
where
\begin{equation*}
  \bar{\mathcal{T}}_{j,k,l}(D) \coloneqq \begin{cases}
    (\mathcal{L}(D))^{l} & \text{ if } \mathcal{C}_{1} \lor \mathcal{C}_{2} \lor \mathcal{C}_{5} \lor \mathcal{C}_{8} \lor \mathcal{C}_{9} \lor \mathcal{C}_{16} \\
    (\mathcal{L}(D))^{k} & \text{ if } \mathcal{C}_{3} \lor \mathcal{C}_{6} \lor \mathcal{C}_{7} \\
    (\mathcal{L}(D))^{j} & \text{ if } \mathcal{C}_{4} \lor \mathcal{C}_{10} \\
    (\mathcal{L}(D))^{j+k} & \text{ if } \mathcal{C}_{11} \lor \mathcal{C}_{12} \lor \mathcal{C}_{13} \lor \mathcal{C}_{14} \lor \mathcal{C}_{15}.
\end{cases}
\end{equation*}
Next, we recall from \eqref{covariance_U_2} that
\begin{equation} \label{terms_covariance}
  \bE{\bar{U}_{0}(l) \bar{U}_{j}(k)} = \bar{T}_{j,k,l} + T_{j,k,l} - \bE{U_{0}(l)} \bE{U_{j}(k)}.
\end{equation}
Using \eqref{expectation_U} we notice that
\begin{equation*}
  \bE{U_{0}(l)} \bE{U_{j}(k)} = T_{j,k,l}
\end{equation*}
whenever the condition $\mathbf{C} \coloneqq \mathcal{C}_{1} \lor \mathcal{C}_{2} \lor \mathcal{C}_{3} \lor \mathcal{C}_{4} \lor \mathcal{C}_{8} \lor \mathcal{C}_{10} \lor \mathcal{C}_{11} \lor \mathcal{C}_{12} \lor \mathcal{C}_{13}$ holds. Thus, in this case,
\begin{equation*}
  \bE{\bar{U}_{0}(l) \bar{U}_{j}(k)} = \bar{T}_{j,k,l}
\end{equation*}
and the upper bound \eqref{upper_bound_Tbar} holds.

In general, we can obtain an upper bound on $\bE{\bar{U}_{0}(l) \bar{U}_{j}(k)}$ by deriving upper bounds on both $T_{j,k,l}$ and $\bE{U_{0}(l)} \bE{U_{j}(k)}$. Specifically, Lemma \ref{lemma:autocovariance} implies that
\begin{equation} \label{product_covariances}
  \abs{\gamma_{Y}(s) \gamma_{Y}(u)} \leq C^{2} e^{-D(s+u)}
\end{equation}  
for all $s,u \geq 0$. It follows that
\begin{equation*}
  \abs{T_{j,k,l}} \leq \biggl( \frac{C \norm{G}_{L^{1}}}{2\pi} \biggr)^{2} \mathcal{T}_{j,k,l}(D),
\end{equation*}
where 
\begin{equation*}
  \mathcal{T}_{j,k,l}(D) \coloneqq \begin{cases}
(\mathcal{L}(D))^{l}    &\text{ if } [k=0] = \mathcal{C}_{1} \lor \mathcal{C}_{2} \lor \mathcal{C}_{4} \lor \mathcal{C}_{8} \lor \mathcal{C}_{9} \lor \mathcal{C}_{10} \\
(\mathcal{L}(D))^{\abs{k-l}}    &\text{ if } [k>0, \; j=0] = \mathcal{C}_{3} \lor \mathcal{C}_{5} \lor \mathcal{C}_{6} \lor \mathcal{C}_{7} \\
(\mathcal{L}(D))^{k+l}    &\text{ if } [j,k>0, \; 0 \leq l \leq j] = \mathcal{C}_{11} \lor \mathcal{C}_{12} \lor \mathcal{C}_{13} \\
(\mathcal{L}(D))^{2j+k-l}    &\text{ if } [j,k>0, \; j < l \leq j+k] = \mathcal{C}_{14} \lor \mathcal{C}_{15} \\
(\mathcal{L}(D))^{l-k}    &\text{ if } [j,k>0, \; l>j+k] = \mathcal{C}_{16}.
\end{cases}    
\end{equation*}
Additionally, \eqref{moments_decrease_exponentially} yields that
\begin{equation} \label{upper_bound_product_expectations}
  \abs{\bE{U_{0}(l)} \bE{U_{j}(k)}} \leq C_{1} (\mathcal{L}(D))^{k+l},
\end{equation}
where $C_{1} \coloneqq (C \norm{G}_{L^{1}}/(2\pi))^{2}$. Since $\mathcal{T}_{j,k,l}(D)$ is larger than both $\bar{\mathcal{T}}_{j,k,l}(D)$ and $(\mathcal{L}(D))^{k+l}$, we see from \eqref{terms_covariance} and the above upper bounds that
\begin{equation} \label{exponential_decay}
  \abs{\bE{\bar{U}_{0}(l) \bar{U}_{j}(k)}} \leq C_{2} \mathcal{T}_{j,k,l}(D)
\end{equation}
for some constant $C_{2}$.

By combining \eqref{upper_bound_Tbar} when condition $\mathbf{C}$ holds and \eqref{exponential_decay} when condition $\mathbf{C}$ does not hold, we conclude that for some constant $C_{3}$
\begin{equation} \label{exponential_decay_2}
  \abs{\bE{\bar{U}_{0}(l) \bar{U}_{j}(k)}} \leq C_{3} \, \mathcal{U}_{j,k,l}(D),
\end{equation}
where
\begin{equation*}
  \mathcal{U}_{j,k,l}(D) \coloneqq \begin{cases}
(\mathcal{L}(D))^{l}    &\text{ if } \mathcal{C}_{1} \lor \mathcal{C}_{2} \lor \mathcal{C}_{8} \lor \mathcal{C}_{9} \\
(\mathcal{L}(D))^{k}    &\text{ if } \mathcal{C}_{3} \\
(\mathcal{L}(D))^{j}    &\text{ if } \mathcal{C}_{4} \lor \mathcal{C}_{10} \\
(\mathcal{L}(D))^{\abs{k-l}}    &\text{ if } \mathcal{C}_{5} \lor \mathcal{C}_{6} \lor \mathcal{C}_{7} \\
(\mathcal{L}(D))^{j+k}    &\text{ if } \mathcal{C}_{11} \lor \mathcal{C}_{12} \lor \mathcal{C}_{13} \\
(\mathcal{L}(D))^{2j+k-l}    &\text{ if } \mathcal{C}_{14} \lor \mathcal{C}_{15} \\
(\mathcal{L}(D))^{l-k}    &\text{ if } \mathcal{C}_{16}.
\end{cases}    
\end{equation*}
However, we need better estimates when $\mathcal{C}_{5} \lor \mathcal{C}_{6} \lor \mathcal{C}_{7} \lor \mathcal{C}_{14} \lor \mathcal{C}_{15} \lor \mathcal{C}_{16}$ holds. For the condition $\mathcal{C}_{5} \lor \mathcal{C}_{6} \lor \mathcal{C}_{7}$, using \eqref{terms_covariance}, we see that \eqref{upper_bound_Tbar} and \eqref{upper_bound_product_expectations} yield a sufficiently fast rate of decay for our purposes. We now study the term
\begin{equation*}
T_{j,k,l} =  \begin{cases}  
\gamma_{Y}^{2}(0) \int_{0}^{\infty} \hat{G}_{R}^{2}(s) f^{\ast l}(s) \, ds &\text{ if } \mathcal{C}_{5} \\
\gamma_{Y}(0) \int_{0}^{\infty} \int_{0}^{\infty} \hat{G}_{R}(s) \hat{G}_{R}(s+t) \gamma_{Y}(t) f^{\ast k}(s) f^{\ast (l-k)}(t) \, ds \, dt &\text{ if } \mathcal{C}_{6} \\
\gamma_{Y}(0) \int_{0}^{\infty}\int_{0}^{\infty} \hat{G}_{R}(s) \hat{G}_{R}(s+t) \gamma_{Y}(t) f^{\ast l}(s) f^{\ast (k-l)}(t) \, ds \, dt &\text{ if } \mathcal{C}_{7}.
\end{cases}
\end{equation*}
We deduce that
\begin{align}
  &\sum_{l=1}^{\infty} T_{j,k,l} = \gamma_{Y}^{2}(0) \int_{0}^{\infty} \hat{G}_{R}^{2}(s) r(s) \, ds  &\text{ if } \mathcal{C}_{5} \label{C5} \\
  &\sum_{k=1}^{\infty} \sum_{l=k+1}^{\infty} T_{j,k,l} = \gamma_{Y}(0) \int_{0}^{\infty} \int_{0}^{\infty} \hat{G}_{R}(s) \hat{G}_{R}(s+t) \gamma_{Y}(t) r(s) r(t) \, ds \, dt &\text{ if } \mathcal{C}_{6} \label{C6} \\
  &\sum_{l=1}^{\infty} \sum_{k=l+1}^{\infty} T_{j,k,l} = \gamma_{Y}(0) \int_{0}^{\infty} \int_{0}^{\infty} \hat{G}_{R}(s) \hat{G}_{R}(s+t) \gamma_{Y}(t) r(s) r(t) \, ds \, dt &\text{ if } \mathcal{C}_{7}. \label{C7}
\end{align}
We recall that the renewal density $r(\cdot)$ is bounded since $f(\cdot)$ is bounded by \ref{H2} (see Theorem 5.26 of \citet{Alsmeyer-2012}). Since $\hat{G}_{R}(\cdot) \in L^{2}(\mathbb{R})$ and $\gamma_{Y}(\cdot)$ decreases exponentially fast by Lemma \ref{lemma:autocovariance}, we conclude that the above integrals are absolutely convergent. Now suppose that $\mathcal{C}_{14} \lor \mathcal{C}_{15} \lor \mathcal{C}_{16}$ holds. Then, \eqref{upper_bound_product_expectations} yields a sufficiently fast rate of decay. We study the term $\bE{U_{0}(l) U_{j}(k)}$ using \eqref{covariance_U}. It follows from \eqref{upper_bound_Mbar} and \eqref{product_covariances} that for some constant $C_{4}$
\begin{equation}
  \abs{\bar{M}(s,t,u)} \leq C_{4} e^{-D(s+u)} \label{upper_bound_M}
\end{equation}
for all $s,t,u \geq 0$. Using \eqref{upper_bound_M} in \eqref{covariance_U}, we obtain that $\abs{\bE{U_{0}(l) U_{j}(k)}}/C_{4}$ is bounded above by
\begin{equation*}
\begin{cases}
\int_{0}^{\infty} \int_{0}^{\infty} \int_{0}^{\infty} \hat{G}_{R}(s+t) \hat{G}_{R}(t+u) e^{-D(s+u)} f^{\ast j}(s) f^{\ast (l-j)}(t) f^{\ast (j+k-l)}(u) \, ds \, dt \, du &\text{ if } \mathcal{C}_{14} \\
\int_{0}^{\infty} \int_{0}^{\infty} \hat{G}_{R}(s+t) \hat{G}_{R}(t) e^{-Ds} f^{\ast j}(s) f^{\ast k}(t) \, ds \, dt &\text{ if } \mathcal{C}_{15} \\
\int_{0}^{\infty} \int_{0}^{\infty} \int_{0}^{\infty} \hat{G}_{R}(s+t+u) \hat{G}_{R}(t) e^{-D(s+u)} f^{\ast j}(s) f^{\ast k}(t) f^{\ast (l-k-j)}(u) \, ds \, dt \, du &\text{ if } \mathcal{C}_{16}.
\end{cases}
\end{equation*}
Using that $f(\cdot)$, $r(\cdot)$ are bounded and $\hat{G}_{R}(\cdot) \in L^{2}(\mathbb{R})$, one can show that 
\begin{align}
  &\sum_{j=1}^{\infty} \sum_{k=2}^{\infty} \sum_{l=j+1}^{j+k-1} \abs{\bE{U_{0}(l) U_{j}(k)}} < \infty \label{C14} \\
  &\sum_{j=1}^{\infty} \sum_{k=1}^{\infty} \abs{\bE{U_{0}(j+k) U_{j}(k)}} < \infty \text{ and } \label{C15} \\
  &\sum_{j=1}^{\infty} \sum_{k=1}^{\infty} \sum_{l=j+k+1}^{\infty} \abs{\bE{U_{0}(l) U_{j}(k)}} < \infty. \label{C16}
\end{align}

\subsection{Finiteness of $\sigma_{J}^{2}$} \label{sec:finiteness_of_sigma2}

We use the formulas and upper bounds in Appendices \ref{sec:explicit_formulas} and \ref{sec:upper_bounds} to show that the limiting variance $\sigma_{J}^{2}$ in Lemma \ref{lemma:clt} is finite. We recall that $\sigma_{J}^{2} = \sigma_{J,\infty}^{2}$, where $\sigma_{J,\infty}^{2}$ is given by \eqref{sigma_m} with $m=\infty$, that is,
\begin{equation*}
\begin{split}  
  \sigma_{J}^{2}=&\bE{\bar{U}_{0}^{2}(0)} + 4 \sum_{l=0}^{\infty} \sum_{k=1}^{\infty}\bE{\bar{U}_{0}(l) \bar{U}_{0}(k)} + 2 \sum_{j=1}^{\infty} \bE{ \bar{U}_{0}(0) \bar{U}_{j}(0) } \\
  + &4 \sum_{j=1}^{\infty} \sum_{k=1}^{\infty} \bE{\bar{U}_{0}(0) \bar{U}_{j}(k) } + 4 \sum_{j=1}^{\infty} \sum_{l=1}^{\infty} \bE{ \bar{U}_{0}(l) \bar{U}_{j}(0) } + 8 \sum_{j=1}^{\infty} \sum_{k=1}^{\infty} \sum_{l=1}^{\infty} \bE{ \bar{U}_{0}(l) \bar{U}_{j}(k) }.
\end{split}  
\end{equation*}
We show that the above series are absolutely convergent. We begin with the second term and consider the case $l=0$. \eqref{exponential_decay_2} implies that
\begin{equation*}
  \sum_{k=1}^{\infty} \abs{\bE{\bar{U}_{0}(0) \bar{U}_{0}(k)}} < \infty.
\end{equation*}
Next, we have from \eqref{terms_covariance} that
\begin{equation*}
  \abs{\bE{\bar{U}_{0}(l) \bar{U}_{j}(k)}} \leq \abs{\bar{T}_{j,k,l}} + \abs{T_{j,k,l}} + \abs{\bE{U_{0}(l)} \bE{U_{j}(k)}}.
\end{equation*}
Using this with $j=0$, \eqref{upper_bound_Tbar}, \eqref{upper_bound_product_expectations}, \eqref{C5} and \eqref{C7} imply that
\begin{equation*}
  \sum_{l=1}^{\infty} \sum_{k=1}^{\infty} \abs{\bE{\bar{U}_{0}(l) \bar{U}_{0}(k)}} = \sum_{l=1}^{\infty} \abs{\bE{\bar{U}_{0}^{2}(l)}} + 2  \sum_{l=1}^{\infty} \sum_{k=l+1}^{\infty} \abs{\bE{\bar{U}_{0}(l) \bar{U}_{0}(k)}} < \infty.
\end{equation*}
Turning to the third and fourth term, \eqref{exponential_decay_2} yields that
\begin{equation*}
  \sum_{j=1}^{\infty} \abs{\bE{ \bar{U}_{0}(0) \bar{U}_{j}(0) }} < \infty \text{ and} \sum_{j=1}^{\infty} \sum_{k=1}^{\infty} \abs{\bE{\bar{U}_{0}(0) \bar{U}_{j}(k) }} < \infty.
\end{equation*}
We now turn to the fifth term and notice that
\begin{equation*}
  \sum_{j=1}^{\infty} \sum_{l=1}^{\infty} \abs{\bE{ \bar{U}_{0}(l) \bar{U}_{j}(0) }}
\end{equation*}
is equal to
\begin{equation*}
  \sum_{l=1}^{\infty} \abs{\bE{ \bar{U}_{0}(l) \bar{U}_{l}(0) }} + \sum_{j=1}^{\infty} \sum_{l=1}^{j-1} \abs{\bE{ \bar{U}_{0}(l) \bar{U}_{j}(0) }} + \sum_{j=1}^{\infty} \sum_{l=j+1}^{\infty} \abs{\bE{ \bar{U}_{0}(l) \bar{U}_{j}(0) }}.
\end{equation*}
\eqref{exponential_decay_2} implies that each of this summations is finite. Finally, we consider
\begin{equation*}
  \sum_{j=1}^{\infty} \sum_{k=1}^{\infty} \sum_{l=1}^{\infty} \abs{\bE{ \bar{U}_{0}(l) \bar{U}_{j}(k) }}.
\end{equation*}
We split the sum over index $l$ as $\sum_{l=1}^{\infty} = \sum_{l=1}^{j} + \sum_{l=j+1}^{\infty}$. Using again \eqref{exponential_decay_2} we see that
\begin{equation*}
  \sum_{j=1}^{\infty} \sum_{k=1}^{\infty} \sum_{l=1}^{j} \abs{\bE{ \bar{U}_{0}(l) \bar{U}_{j}(k) }} = \sum_{j=1}^{\infty} \sum_{k=1}^{\infty} \sum_{l=1}^{j-1} \abs{\bE{ \bar{U}_{0}(l) \bar{U}_{j}(k) }} + \sum_{j=1}^{\infty} \sum_{k=1}^{\infty} \abs{\bE{ \bar{U}_{0}(j) \bar{U}_{j}(k) }} < \infty.
\end{equation*}
We are left with the term
\begin{equation*}
  \sum_{j=1}^{\infty} \sum_{k=1}^{\infty} \sum_{l=j+1}^{\infty} \abs{\bE{ \bar{U}_{0}(l) \bar{U}_{j}(k) }}.
\end{equation*}
Since $\bE{ \bar{U}_{0}(l) \bar{U}_{j}(k) } = \bE{ U_{0}(l) U_{j}(k) } - \bE{ U_{0}(l)} \bE{ U_{0}(k) }$ and by \eqref{upper_bound_product_expectations}
\begin{equation*}
  \sum_{j=1}^{\infty} \sum_{k=1}^{\infty} \sum_{l=j+1}^{\infty} \abs{\bE{ U_{0}(l)} \bE{ U_{0}(k) }} < \infty,
\end{equation*}
it suffices to show that
\begin{equation*}
  \sum_{j=1}^{\infty} \sum_{k=1}^{\infty} \sum_{l=j+1}^{\infty} \abs{\bE{ U_{0}(l) U_{j}(k) }} < \infty.
\end{equation*}
Now, this follows from \eqref{C14}, \eqref{C15}, and \eqref{C16}.
  
\subsection{Variance of $\sqrt{n} J_{n}$} \label{sec:variance_of_nJn}

In this section we compute the variance of
\begin{equation*}
  \sqrt{n} J_{n} = \frac{1}{\sqrt{n}} \sum_{j=1}^{n} \sum_{k=-(n-j)}^{n-j} U_{j}(\abs{k}),
\end{equation*}
where we recall that $\bar{U}_{j}(k) \coloneqq U_{j}(k) - \bE{U_{j}(k)}$ for all $j,k \in \mathbb{N}_{0}$. Using \eqref{Sm} we have
\begin{align*}
  \bV{nJ_{n}} &= \mathbf{Var} \biggl[ \sum_{j=1}^{n} S_{j}(n-j) \biggr] \\
  &= \sum_{i=1}^{n} \biggl( \bE{S_{i}^{2}(n-i)} + 2 \sum_{j=1}^{n-i} \bE{S_{i}(n-i) S_{i+j}(n-(i+j))} \biggr).
\end{align*}
By stationarity, we see that
\begin{equation*}
  \bE{S_{i}(n-i) S_{i+j}(n-(i+j))} = \bE{S_{0}(n-i) S_{j}(n-(i+j))}
\end{equation*}  
and, using again \eqref{Sm}, we obtain that this is equal to
\begin{equation} \label{covariance_Sj}
\begin{split}  
  &\bE{\bar{U}_{0}(0) \bar{U}_{j}(0)} + 2 \sum_{k=1}^{n-(i+j)} \bE{\bar{U}_{0}(0) \bar{U}_{j}(k)} \\
  + 2 &\sum_{l=1}^{n-i} \bE{\bar{U}_{0}(l) \bar{U}_{j}(0)} + 4 \sum_{l=1}^{n-i} \sum_{k=1}^{n-(i+j)} \bE{\bar{U}_{0}(l) \bar{U}_{j}(k) }.
\end{split}  
\end{equation}
In particular, we have
\begin{equation*}
  \bE{S_{i}^{2}(n-i)} = \bE{S_{0}^{2}(n-i)} = \bE{\bar{U}_{0}^{2}(0)} + 4 \sum_{l=0}^{n-i} \sum_{k=1}^{n-i} \bE{\bar{U}_{0}(l) \bar{U}_{0}(k) }.
\end{equation*}  
We deduce that $\bV{nJ_{n}}$ is equal to
\begin{equation} \label{variance_nJn}
\begin{split}  
  &\sum_{i=1}^{n} \biggl( \bE{\bar{U}_{0}^{2}(0)} + 4 \sum_{l=0}^{n-i} \sum_{k=1}^{n-i} \bE{\bar{U}_{0}(l) \bar{U}_{0}(k)} + 2 \sum_{j=1}^{n-i} \biggl( \bE{ \bar{U}_{0}(0) \bar{U}_{j}(0) } \\
  + 2 &\sum_{k=1}^{n-(i+j)} \bE{\bar{U}_{0}(0) \bar{U}_{j}(k) } + 2 \sum_{l=1}^{n-i} \bE{ \bar{U}_{0}(l) \bar{U}_{j}(0) } + 4 \sum_{l=1}^{n-i} \sum_{k=1}^{n-(i+j)} \bE{ \bar{U}_{0}(l) \bar{U}_{j}(k) } \biggr) \biggr).
\end{split}  
\end{equation}
Thus,
\begin{equation*}
  \lim_{n \to \infty} \bV{\sqrt{n} J_{n}} = \sigma_{J,\infty}^{2}=\sigma_{J}^{2},
\end{equation*}
where $\sigma_{J,\infty}^{2}$ is given by \eqref{sigma_m}. We notice that the above convergence is implied by Lemma \ref{lemma:clt}, but the above computations are explicit.

\subsection{Covariance matrix $Q$} \label{sec:covariance_matrix_Q}

In this section we explicitly compute the covariance matrix $Q$. We first notice that the quantities $J_{n}=J_{n}(G)$, $\sigma_{J}^{2}=\sigma_{J}^{2}(G)$, $S_{j}(m)=S_{j}(m,G)$, $U_{j}(k)=U_{j}(k,G)$, and $\bar{U}_{j}(k)=\bar{U}_{j}(k,G)$ all depend on the function $G \in L^{1}(\R) \cap L^{2}(\R)$. In the previous section we have computed
\begin{equation*}
  \bV{nJ_{n}(G)} = \mathbf{E} \biggl[ \biggl( \sum_{j=1}^{n} S_{j}(n-j,G) \biggr)^{2} \biggr]
\end{equation*}
and shown that
\begin{equation*}
  \lim_{n \to \infty} \bV{\sqrt{n} J_{n}(G)} = \sigma_{J}^{2}(G).
\end{equation*}
For $s=1,\dots,p+q$ let $\alpha_{s} \in \R$ and $G_{s} \in L^{1}(\R) \cap L^{2}(\R)$. We compute below
\begin{equation*}
  \mathbf{Var} \biggl[ nJ_{n}(\sum_{s=1}^{p+q} \alpha_{s} G_{s}) \biggr] = \sum_{s=1}^{p+q} \sum_{t=1}^{p+q} \alpha_{s} \alpha_{t} \mathbf{E} \biggl[ \biggl( \sum_{i=1}^{n} S_{i}(n-i, G_{s}) \biggr) \biggl( \sum_{j=1}^{n} S_{j}(n-j, G_{t}) \biggr) \biggr]
\end{equation*}
and determine $\lim_{n \to \infty} \bV{\sqrt{n} J_{n}(\sum_{s=1}^{p+q} \alpha_{s} G_{s})}$. Using stationarity we see that
\begin{align*}
   n Q_{s,t}(G_{s},G_{t},n) &\coloneqq \mathbf{E} \biggl[ \biggl( \sum_{i=1}^{n} S_{i}(n-i, G_{s}) \biggr) \biggl( \sum_{j=1}^{n} S_{j}(n-j, G_{t}) \biggr) \biggr] \\
  &= \sum_{i=1}^{n} \bE{S_{0}(n-i, G_{s}) S_{0}(n-i, G_{t})} \\
  &+ \sum_{i=1}^{n} \sum_{j=1}^{n-i} \bE{S_{0}(n-i, G_{s}) S_{j}(n-(i+j), G_{t})} \\
  &+ \sum_{j=1}^{n} \sum_{i=1}^{n-j} \bE{S_{i}(n-(i+j), G_{s}) S_{0}(n-j, G_{t})}.
\end{align*}
Next, using \eqref{covariance_Sj} we see that $\bE{S_{0}(n-i, G_{s}) S_{j}(n-(i+j), G_{t})}$ is equal to
\begin{align*}  
  &\bE{\bar{U}_{0}(0,G_{s}) \bar{U}_{j}(0,G_{t})} + 2 \sum_{k=1}^{n-(i+j)} \bE{\bar{U}_{0}(0,G_{s}) \bar{U}_{j}(k,G_{t})} \\
  + 2 &\sum_{l=1}^{n-i} \bE{\bar{U}_{0}(l,G_{s}) \bar{U}_{j}(0,G_{t})} + 4 \sum_{l=1}^{n-i} \sum_{k=1}^{n-(i+j)} \bE{\bar{U}_{0}(l,G_{s}) \bar{U}_{j}(k,G_{t}) }.
\end{align*}
We deduce that
\begin{align*}  
  n Q_{s,t}(G_{s},G_{t},n) = \sum_{i=1}^{n} \biggl( &\bE{\bar{U}_{0}(0,G_{s}) \bar{U}_{0}(0,G_{t})} + 2 \sum_{k=1}^{n-i} \bE{\bar{U}_{0}(0,G_{s}) \bar{U}_{0}(k,G_{t})} \\
  + 2 &\sum_{l=1}^{n-i} \bE{\bar{U}_{0}(l,G_{s}) \bar{U}_{0}(0,G_{t})} + 4 \sum_{l=1}^{n-i} \sum_{k=1}^{n-i} \bE{\bar{U}_{0}(l,G_{s}) \bar{U}_{0}(k,G_{t}) } \\
  + \sum_{j=1}^{n-i} \biggl( &\bE{\bar{U}_{0}(0,G_{s}) \bar{U}_{j}(0,G_{t})} + 2 \sum_{k=1}^{n-(i+j)} \bE{\bar{U}_{0}(0,G_{s}) \bar{U}_{j}(k,G_{t})} \\
  + 2 &\sum_{l=1}^{n-i} \bE{\bar{U}_{0}(l,G_{s}) \bar{U}_{j}(0,G_{t})} + 4 \sum_{l=1}^{n-i} \sum_{k=1}^{n-(i+j)} \bE{\bar{U}_{0}(l,G_{s}) \bar{U}_{j}(k,G_{t})} \biggr) \\
  + \sum_{i=1}^{n-j} \biggl( &\bE{\bar{U}_{i}(0,G_{s}) \bar{U}_{0}(0,G_{t})} + 2 \sum_{k=1}^{n-(i+j)} \bE{\bar{U}_{i}(k,G_{s}) \bar{U}_{0}(0,G_{t})} \\
  + 2 &\sum_{l=1}^{n-j} \bE{\bar{U}_{i}(0,G_{s}) \bar{U}_{0}(l,G_{t})} + 4 \sum_{l=1}^{n-j} \sum_{k=1}^{n-(i+j)} \bE{\bar{U}_{i}(k,G_{s}) \bar{U}_{0}(l,G_{t})} \biggr) \biggr).
\end{align*}
In the special case that $G_{s}=G_{t}=G$ we see that $n Q_{s,t}(G,G,n)$ is equal to \eqref{variance_nJn}. The above calculations show that
\begin{equation*}
  \mathbf{Var} \biggl[ \sqrt{n} J_{n}(\sum_{s=1}^{p+q} \alpha_{s} G_{s}) \biggr] = \sum_{s=1}^{p+q} \sum_{t=1}^{p+q} \alpha_{s} \alpha_{t} Q_{s,t}(G_{s},G_{t},n)
\end{equation*}
We conclude that
\begin{equation} \label{limiting_variance_sum_Gs}
  \lim_{n \to \infty} \mathbf{Var} \biggl[ \sqrt{n} J_{n}(\sum_{s=1}^{p+q} \alpha_{s} G_{s}) \biggr] = \sum_{s=1}^{p+q} \sum_{t=1}^{p+q} \alpha_{s} \alpha_{t} Q_{s,t}(G_{s},G_{t}),
\end{equation}
where
\begin{equation} \label{entries_Q_G}
\begin{split}  
  Q_{s,t}(G_{s},G_{t}) &\coloneqq  \lim_{n \to \infty} Q_{s,t}(G_{s},G_{t},n) \\
  =&\bE{\bar{U}_{0}(0,G_{s}) \bar{U}_{0}(0,G_{t})} + 2 \sum_{k=1}^{\infty} \bE{\bar{U}_{0}(0,G_{s}) \bar{U}_{0}(k,G_{t})} \\
  + 2 &\sum_{l=1}^{\infty} \bE{\bar{U}_{0}(l,G_{s}) \bar{U}_{0}(0,G_{t})} + 4 \sum_{l=1}^{\infty} \sum_{k=1}^{\infty} \bE{\bar{U}_{0}(l,G_{s}) \bar{U}_{0}(k,G_{t}) } \\
  + \sum_{j=1}^{\infty} \biggl( &\bE{\bar{U}_{0}(0,G_{s}) \bar{U}_{j}(0,G_{t})} + 2 \sum_{k=1}^{\infty} \bE{\bar{U}_{0}(0,G_{s}) \bar{U}_{j}(k,G_{t})} \\
  + 2 &\sum_{l=1}^{\infty} \bE{\bar{U}_{0}(l,G_{s}) \bar{U}_{j}(0,G_{t})} + 4 \sum_{l=1}^{\infty} \sum_{k=1}^{\infty} \bE{\bar{U}_{0}(l,G_{s}) \bar{U}_{j}(k,G_{t})} \biggr) \\
  + \sum_{i=1}^{\infty} \biggl( &\bE{\bar{U}_{i}(0,G_{s}) \bar{U}_{0}(0,G_{t})} + 2 \sum_{k=1}^{\infty} \bE{\bar{U}_{i}(k,G_{s}) \bar{U}_{0}(0,G_{t})} \\
  + 2 &\sum_{l=1}^{\infty} \bE{\bar{U}_{i}(0,G_{s}) \bar{U}_{0}(l,G_{t})} + 4 \sum_{l=1}^{\infty} \sum_{k=1}^{\infty} \bE{\bar{U}_{i}(k,G_{s}) \bar{U}_{0}(l,G_{t})} \biggr) \biggr).
\end{split}  
\end{equation}
In the special case that $G_{s}=G_{t}=G$ we see that $Q_{s,t}(G,G) = \sigma_{J}^{2}(G)$. Finally, we notice that
\begin{equation*}
  \bE{\bar{U}_{0}(0,G_{s}) \bar{U}_{j}(k,G_{t})} = \bE{U_{0}(0,G_{s}) U_{j}(k,G_{t})} - \bE{U_{0}(0,G_{s})} \bE{U_{0}(k,G_{t})}
\end{equation*}
can be explicitly computed for all $j,k,l \in \N_{0}$ using \eqref{expectation_U} and a slight modification of \eqref{covariance_U} taking into account that $G_{s}$ and $G_{t}$ may differ from $G$.
\end{document}